\newtheorem{theorem}{Theorem}
\theoremstyle{definition}
\newtheorem{definition}{Definition}
\newtheorem*{algorithm*}{Algorithm}
\newtheorem{lemma}[theorem]{Lemma}
\newtheorem{corollary}[theorem]{Corollary}
\theoremstyle{remark}
\newtheorem*{remark}{Remark}
\newtheorem*{acknowledgement}{Acknowledgement}
\newcommand{\N}{\mathbb{N}}
\newcommand{\C}{\mathbb{C}}
\newcommand{\Z}{\mathbb{Z}}
\newcommand{\R}{\mathbb{R}}
\newcommand{\md}{\mathbb{M}}
\newcommand{\s}{\mathbb{S}^2}
\newcommand{\sdd}{\mathbb{S}^d}
\newcommand{\dx}{\ \mathrm{d}x}
\newcommand{\dt}{ \mathrm{d}t}
\newcommand{\dm}{\ \mathrm{d}\mu}
\newcommand{\id}{\mathrm{id}}
\newcommand{\onehalf}{\frac{1}{2}}
\newcommand{\fatone}{\mathds{1}}
\newcommand{\scd}{\mathcal{D}_{\infty}}
\newcommand{\lpd}{\mathcal{D}_{p}}
\newcommand{\ra}{\rightarrow}
  \date{}
\begin{document}
	\title{Uniform distribution via lattices: from point sets to sequences}
	\author{Damir Ferizovi\'{c}\thanks{KU Leuven, Department of Mathematics,  3001 Leuven (BE). damir.ferizovic@kuleuven.be\\
			 Keywords: digit sums, van der Corput sequences, discrepancy function,  two sphere.\\
			  MSC2020: 11K36 (primary), 11K38, 52C99.\\
			   The
			author thankfully acknowledges support by the Methusalem grant METH/21/03 – long term structural funding of the Flemish Government; and was partially funded for a short stay in Santander, Spain by
			Grant PID2020-113887GB-I00 of MCIN/ AEI /10.13039/501100011033. }}
\maketitle 
\begin{abstract}
 In this work we construct many sequences $S=S^\Box_{b,d}$ and  $S=S^\boxplus_{b,d}$ in the $d$--dimensional unit hypercube, which for $d=1$ are (generalized) van der Corput sequences or Niederreiter's $(0,1)$-sequences in base $b$, respectively. Further, we introduce the notion of $f$-subadditivity and use it to define discrepancy functions which subsume the notion of $L^p$-discrepancy, Wasserstein $p$-distance,  and many more methods to compare empirical measures to an underlying base measure.  
 We will relate bounds for a given discrepancy function $\mathscr{D}$ of the multiset of projected lattice sets (treated as empirical measures), $P(b^{-m}\Z^d$), to  bounds of $\mathscr{D}(E_{Z_N})$, i.e. the initial segments of the sequence $Z=P(S)$ for any $N\in\N$. We show that this relation holds in any dimension $d$ and for any map $P$ defined on a hypercube, for which bounds on $\mathscr{D}(E_{P(b^{-m}\Z^d+v)})$ can be obtained. 
	We apply this theorem in $d=1$ to obtain bounds for the $L^p$--discrepancy of van der Corput and Niederreiter (0,1) sequences  in terms of digit sums for all $0<p\leq \infty$. In $d=2$ an application of our construction yields many sequences on the two-sphere, such that the initial segments $Z_N$ have small $L^\infty$--discrepancy.
\end{abstract}

\section{Outline}

In this work we consider sequences $Z=(z_1,z_2,\ldots)$ in an arbitrary non-trivial set $\md$, where $Z$ will be the image of a certain sequence $S=(s_0,s_1,\ldots)$ in $[0,1)^d$ under some map $P:[0,1)^d\ra\md$ (note the index shift).  We write $Z=P(S)$ if $z_{n+1}=P(s_n)$ and denote   \textbf{initial segments} by $Z_N=(z_1,\ldots,z_{N})$.  For any  $M\subset [0,1)^d$, $P(M)$ shall be understood as a multiset, i.e. repeated elements are allowed, and we will often treat $Z_N$ as the multiset $\{z_1,\ldots,z_{N}\}$. Empirical measures with atoms at elements from a multiset $W$ will be denoted by $E_W$. We will present a simple method to construct sequences $S$ in $[0,1)^d$ with built in self-similarity, which for $d=1$ are (generalized) van der Corput sequences. Our main result,  Theorem \ref{thm_main}, allows us to transfer bounds: if  for a given discrepancy function $\mathscr{D}$ and all lattice sets $\ell^d_{b^{m}}$ one has  bounds on $\mathscr{D}(E_{P(\ell^d_{b^{m}}+v)})$, we will derive upper bounds for $\mathscr{D}(E_{Z_N})$ -- the notation is explained in Section \ref{sec_IntroResults}, Definitions \ref{def_perturbedLatticeSet} and \ref{def_discrepancyFunction}.
 Applications of our main result are found in Section \ref{sec_UniforDistrThe}, where we prove many bounds  for the  van der Corput sequences and their generalizations, and then give many examples of sequences on the two-dimensional unit sphere $\s$ with small $L^\infty$--discrepancy. This section also contains  Theorem \ref{thm_SizeOfSetsB}, which "counts" the number of integers $K\in\{1,\ldots,N\}$  for certain sequences $S$ that satisfy much tighter upper bounds for $\mathscr{D}(E_{S_K})$. The construction of the fundamental sequence $S^\Box_{b,d}$ is found in Section \ref{sec_SboxSequence}, and the proof of our main result for these sequences follows in Section \ref{subsec_proofThm1LmLatticeSets}. \textit{Indeed, if the reader  wants a quick understanding of  Theorem \ref{thm_main}, only  Section \ref{sec_IntroResults} and \ref{sec_Theorem1SBox} need to be read (see Figures \ref{fig:Lem8Visualization} and \ref{fig:Thm1Visualization}).} The  sequences $S^\boxplus_{b,d}$ are constructed in Section \ref{subsec_GuidedSequences} and for $d=1$ coincide with Niederreiter's (0,1)--sequences. The proof of our main result for these sequences  then follows in Section \ref{subsec_proofThm1LmPerturbed}.  Section \ref{sec_proofSizeofSetsB} contains the proof of Theorem \ref{thm_SizeOfSetsB}. Section \ref{sec_proofApplications} collects the proofs of the  applications in uniform distribution theory, where we highlight the fact that our construction and Theorem \ref{thm_main} simplify matters to such an extent that half the results  follow from elementary computations alone. In Section 8 we extend our construction of $S^\boxplus$ to  special self--similar sets.

\section{Basic Definitions and Main Result}\label{sec_IntroResults}

Our main result needs information on the discrepancy of perturbed lattice sets.
\begin{definition}\label{def_perturbedLatticeSet}
	 A \textbf{lattice set of resolution $K$} shall be a set of the form $\ell^d_K:=\big(\frac{1}{K} \Z^d\big)\cap[0,1)^d$, where $K\in\N$. Let $\omega_N\subset [0,1)^d$ with\footnote{For a finite multiset $S$, we write $|S|$ for the number of elements in it (counting multiplicities).}  $|\omega_{N}|=N=K^d$.  We call $\omega_{N}$ a \textbf{perturbed lattice set of resolution $K$}  if 
	for any $q\in \ell^d_K$ we have
	\[
	\Big|\omega_N\cap \big(q+\tfrac{1}{K}[0,1)^d\big)\Big|=1.
	\]
	Let  $T_v(\omega_{N})=\omega_{N}+v$ and call $T_v$ a \textbf{valid translate} for $\omega_{N}$ if $T_v(\omega_{N})\subset[0,1)^d$.
\end{definition}

A perturbed lattice set of resolution $K$ is thus the result of allowing each lattice point to be placed arbitrarily in its lattice square (or hypercube) of side length $1/K$. See Figure \ref{fig:Sboxplus} a) for an example with $K=d=2$ and $\omega_{4}=\{q_0,q_1,q_2,q_3\}$. 

\begin{definition}\label{def_fSubLinear}
	Let $A_C$ be an additively-closed subset of an additive group, i.e. $\forall x,y\in A_C$ we have $x+y\in A_C$.  We shall say that a function $\mathcal{L}$ is $f$-\textbf{subadditive} over $A_C$ if $\mathcal{L}:A_C\ra\R$ and  
	$f:\N\ra\R$ is a function satisfying 
	\[
	\mathcal{L}\Big(\sum_{j=1}^{n}x_j\Big)\ \leq\ f(n)\cdot \sum_{j=1}^{n} \mathcal{L}(x_j), \hspace{1cm}\forall n\in\N,\ \forall x_1,\ldots,x_n\in A_C.
	\]
\end{definition}
An important example over $\R$ is given by  $\mathcal{L}(x):=|x|^p$; for $p\geq 1$ it  is $f$-subadditive with $f(n)=n^{p-1}$, by the triangle and 
Jensen inequalities:
\[
\Big|\sum_{j=1}^nx_j\Big|^p\leq\Big(\sum_{j=1}^n|x_j|\Big)^p\leq n^{p-1}\sum_{j=1}^n|x_j|^p.
\] 
If   $\mathcal{L}(x):=|x|^p$ with $0<p<1$, then  it is $f$-subadditive over $\R$ with $f(n)\equiv 1$, which follows from the triangle inequality, continuity and the fact that for $(x_1,\ldots,x_m)=\vec{x}\in\big(\R_{>0}\big)^m$,
\begin{equation}\label{eq_subadditivePless1}
	g_p(\vec{x}):=\Big(\sum_{j=1}^n x_j\Big)^{p}- \sum_{j=1}^nx_j^p\ <\ 0.
\end{equation}
Equation \eqref{eq_subadditivePless1}  follows from Euler's homogeneous function theorem, since $g_p(r\vec{x})=r^pg_p(\vec{x})$ for $r> 0$, thus
\[
p\cdot g_p(\vec{x})=\sum_{j=1}^{n}x_j\frac{\partial g_p}{\partial x_j}(\vec{x})\ < \ 0,\hspace{0.3cm}\mbox{since }x_j>0\mbox{ and } \frac{\partial g_p}{\partial x_j}(\vec{x})<0.
\]
\begin{definition}\label{def_discrepancyFunction}
Consider the set $\mathcal{E}_\md$ of \textbf{empirical measures} $E_{Z_N}:= \sum_{j=1}^{N}\delta(z_j)$ on some set $\md$, $z_j\in Z_N$, where $\delta(x)$ is the delta-measure at the point $x$: 	
	\[
	\mathcal{E}_\md=\Big\{E_{Z_N}\ \Big|\ Z_{N}=\{z_1,\ldots,z_{N}\},\ Z_N\mbox{ a multiset},\ N\in\N,\ z_j\in\md\Big\}.
	\]
	A function-pair $(\mathscr{D},f)$ shall be called \textbf{admissible} if $\mathscr{D}$ is $f$-subadditive over $\mathcal{E}_\md$, i.e. $\mathscr{D}:\mathcal{E}_\md\ra\R$ and $\mathscr{D}(\sum^n\nu_j)\leq f(n)\sum^n \mathscr{D}(\nu_j)$. We call $\mathscr{D}$ a \textbf{discrepancy function} if there is an $f$ that makes  $(\mathscr{D},f)$  admissible. 
\end{definition}
Even though $\mathscr{D}$ is defined on empirical measures, we advise the reader to think of $\mathscr{D}$ as a function on point sets of $\md$, and in our main results the element $\sum^n\nu_j$ from the definition above will be the initial segment of a sequence $Z$, and each $\nu_j$ will be the projection of a perturbed lattice set. We introduced $f$-subadditive and discrepancy functions in order to state our main result  in full generality. As we shall see in the remarks after Definition \ref{def_LpDiscrepancy}, our admissible function-pair subsumes many notions of discrepancy.

\vspace{0.2cm}
Our main result Theorem \ref{thm_main} is heavy in notation and in order to  give the gist of it quickly we first state a simple version of it.
	
\vspace{0.2cm}
\textbf{Simplified Main Result.} Let $P:[0,1)^d\ra\md$ be a map.
	Fix an admissible function-pair  $(\mathscr{D},f)$ and integer $b>1$. Fix any sequence $S^\Box_{b,d}=(s_0,s_1,\ldots)$ as constructed in Section \ref{sec_SboxSequence} and write
	\[
	Z=(z_1,z_2,\ldots)=:P(S^\Box_{b,d}) \hspace{0.5cm}\mbox{where} \hspace{0.3cm} z_{j+1}=P(s_j)
	\]
	   for the mapped sequence, then we have for every $N\in\N$ the bound
	\[
	\mathscr{D}\big(E_{Z_{N}}\big)=\mathscr{D}\big(E_{\{z_1,\ldots,z_{N}\}}\big)\ \leq\ f\Big(\sum_{j=0}^n\epsilon_j\Big)\cdot \sum_{j=0}^n\epsilon_j h_{j}
	\]
	where the $n,\ \epsilon_j$'s appear in the $b^d$--adic expansion of $N=\sum_{j=0}^{n}\epsilon_j b^{dj}$ and 
	\[
	h_m:=\sup_{v\in[0,b^{-m})^d}\mathscr{D}\big(E_{P((b^{-m} \Z^d+v)\cap[0,1)^d)}\big),
	\]
	i.e. the $h_j$'s are the supremum of the discrepancy function $\mathscr{D}$ over all point sets that arise as images of shifted lattice sets under $P$.

	\begin{remark}
			It is in general very hard to bound the discrepancy of every initial segment of a sequence because for instance such point sets do not satisfy optimality conditions which could be used in a proof. 
		Our theorem thus provides a powerful and simple tool to give such bounds via discrepancies of point sets with a well understood structure, and in order to apply this theorem all we really need are bounds on these $h_m$. 
	\end{remark}
	\vspace{0.2cm}

	We will apply this theorem for $d=1$ with $\mathscr{D}$ being the $L^p$-discrepancy on $[0,1]$ and where $S^\Box_{b,1}$ turn out to  be generalized van der Corput sequences. The numbers $h_m$ can be bounded by the same constant and  we obtain novel bounds for discrepancies of these sequences for any $p\in (0,\infty]$ in terms of sums of digits of $N$, i.e. terms of the form $\sum_{j=0}^n\epsilon_j$, see Theorem \ref{thm_ApplvdC}.
	
	\vspace{0.2cm}
	The "full" main result is much heavier on notation and the reason for it is to implicitly address the following three issues.
	 
	\begin{enumerate}
		\item Real world applications deal with very large ordered sets instead of sequences and our theorem  holds with simpler requirements in this finite case, where in the definition of $h_m$ instead of a supremum only the maximum over a finite set needs to be computed, i.e. for a given $N=\sum_{j=0}^{n}\epsilon_j b^{dj}$ all we need is the maximum over shifts $v$ in $W^{n,b}_m= [0,b^{-m})^d\cap \big(\Z^d\cdot b^{-n-1}\big)$.
		
		\item The theorem also holds for generalized constructions like $S^\boxplus_{b,d}$ as in Section \ref{subsec_GuidedSequences}, but this complicates the description of the domain $\Omega^n_m$  over which we have to take the supremum in the definition of $h_m^n$.
		
		\item If $N=\sum_{j=0}^{n}(b^{d}-1) b^{dj}$  then  $N+1=b^{dn+d}$ and the upper bounds in the theorem experience a big drop from $\mathscr{D}\big(E_{Z_{N}}\big)$ to $\mathscr{D}\big(E_{Z_{N+1}}\big)$. There is a version of our result that behaves more continuously in this regard if $\mathscr{D}$ is additionally  a discrepancy function over $\mathcal{E}_\md-\mathcal{E}_\md$, i.e.
		\[
		\mathscr{D}:\big(\mathcal{E}_\md-\mathcal{E}_\md\big)\ra\R
		\]  and with the same $f$ 
		\[
		\mathscr{D}\Big(\sum^n\nu_j-\sum^k\mu_j\Big)\leq f(n+k)\sum^n \mathscr{D}(\nu_j)
		+f(n+k)\sum^k \mathscr{D}(\mu_j).
		\]
	\end{enumerate}
	 
	Theorem \ref{thm_main} holds for all sequences $S^\boxplus_{b,d}$ but for the simpler constructions $S^\Box_{b,d}$ we can relax our conditions. Let $\mathscr{Z}^u$ represent  the initial segment of length $b^{ud}\leq \infty$ of either $P(S^\boxplus_{b,d})$ or  $P(S^\Box_{b,d})$ where the choice is up to the practitioner. The option between $S^\boxplus_{b,d}$ and  $S^\Box_{b,d}$ influences the set $L_m$ and thus the description of the domain $\Omega^n_m$ which  determines the difficulty in computing bounds for $h^n_m$.

\begin{theorem}[Main result]\label{thm_main}Let $P:[0,1)^d\ra\md$ be a map.
	 Fix an admissible function-pair  $(\mathscr{D},f)$, integer $b>1$ and $u\in\N\cup\{\infty\}$. Let $(L_m,\ \mathscr{Z}^u)$ have one of two meanings for   $0\leq m\leq u$ (where $\mathscr{Z}^\infty$ is the set of sequences):
	 \begin{enumerate}
	 	\item   $L_m=\{\ell^d_{b^m}\}$ and $\mathscr{Z}^u=\big\{ P(S^\Box_{b,d})_{b^{ud}}\ |\ S^\Box_{b,d}\mbox{ as in Section \ref{sec_SboxSequence}} \big\}\subset\md^{b^{ud}};$
	 	
	 	\item $L_m=\big\{\omega\subset[0,1)^d\ |\ \omega\mbox{ is a perturbed lattice set of resolution  }b^m  \big\}$, and 
	 	\[
	 	\mathscr{Z}^u=\big\{ P(S^\boxplus_{b,d})_{b^{ud}}\ |\ S^\boxplus_{b,d}\mbox{ as in Section \ref{subsec_GuidedSequences}} \big\}\subset\md^{b^{ud}}.
	 	\] 
	 \end{enumerate}
	 Define the sets $W^{k,b}_m= [0,b^{-m})^d\cap \big(\Z^d\cdot b^{-k-1}\big)$ and let $W^{\infty,b}_m=\bigcup_{k=0}^\infty W^{k,b}_m$. Set $\Omega_m^\star:=\big\{(\omega,v)\in L_m\times W^{\star,b}_m\ |\ T_v\mbox{ is a valid translate for }\omega  \big\}$ and let, for $k\in\N$,
	\[
	\sup_{(\omega,v)\in \Omega_m^k}\mathscr{D}\big(E_{P(T_{v}(\omega))}\big)=: h_m^k\ \leq\ h_m^\infty:=\sup_{(\omega,v)\in \Omega_m^\infty}\mathscr{D}\big(E_{P(T_{v}(\omega))}\big).
	\]
	Let $N< b^{ud}$ with $b^d$--adic expansion $N=\sum_{j=0}^{n}\epsilon_j b^{dj}$. Then  for any  $Z\in \mathscr{Z}^u$,
	 \begin{equation}\label{eq_mainResultN}
	 	\mathscr{D}\big(E_{Z_{N}}\big)\ \leq\ f\Big(\sum_{j=0}^n\epsilon_j\Big)\cdot \sum_{j=0}^n\epsilon_j h_{j}^{n}\ \leq\ f\Big(\sum_{j=0}^n\epsilon_j\Big)\cdot \sum_{j=0}^n\epsilon_j h_{j}^{\infty}.
	 \end{equation}
Let $N'=1+\sum_{j=0}^{n}\big(b^d-1-\epsilon_j\big)\cdot b^{dj}$ such that   $N=b^{(n+1)d}-N'$. If $\mathscr{D}$ is additionally  $f$-subadditive over $\big\{\nu_1-\nu_2\ |\ \nu_1,\nu_2\in\mathcal{E}_\md \big\}$, then with $\mathscr{Z}^u,\{h_m^n\},$ and $N'$ as above, we obtain 
	 \begin{equation}\label{eq_mainResultNprime}
	\mathscr{D}\big(E_{Z_{N}}\big)\leq
	f\Big(2+\sum_{j=0}^n\big(b^{d}-1-\epsilon_j\big)\Big)\cdot\Big(h_{n+1}^{n}+h_{0}^{n}+\sum_{j=0}^n(b^d-1-\epsilon_j\big)h_j^{n}\Big).
\end{equation}
\end{theorem}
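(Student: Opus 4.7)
The plan is to exploit the self-similar block structure of $S^\Box_{b,d}$ (respectively $S^\boxplus_{b,d}$), which is built into the construction so that every block of $b^{md}$ consecutive entries of $S$ whose starting position is a multiple of $b^{md}$ has the form $T_v(\omega)$ with $\omega\in L_m$ and translation vector $v\in\Z^d\cdot b^{-n-1}\cap[0,b^{-m})^d=W^{n,b}_m$, with $T_v$ valid for $\omega$. Given this property, both inequalities reduce to reading the base-$b^d$ expansion of $N$ and applying the appropriate version of $f$-sublinearity.

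For \eqref{eq_mainResultN}, read the digits of $N=\sum_{j=0}^n\epsilon_j b^{dj}$ from most to least significant: the first $\epsilon_n b^{nd}$ entries of $Z$ are $\epsilon_n$ consecutive blocks of size $b^{nd}$, the next $\epsilon_{n-1}b^{(n-1)d}$ entries are $\epsilon_{n-1}$ consecutive blocks of size $b^{(n-1)d}$, and so on down to $\epsilon_0$ singletons. This yields the multiset identity
\[
E_{Z_N}\;=\;\sum_{j=0}^n\sum_{i=1}^{\epsilon_j}E_{P(T_{v_{j,i}}(\omega_{j,i}))},
\]
with $(\omega_{j,i},v_{j,i})\in\Omega_j^n$ for every $(j,i)$. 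Applying $f$-sublinearity of $\mathscr{D}$ over $\mathcal{E}_\md$ to this sum of $\sum_j\epsilon_j$ empirical measures and bounding each summand by $h_j^n\leq h_j^\infty$ gives both chains of inequalities in \eqref{eq_mainResultN}.

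For \eqref{eq_mainResultNprime}, complete $Z_N$ to the full level-$(n+1)$ image and subtract the tail: write $E_{Z_N}=E_{Z_{b^{(n+1)d}}}-E_{\mathrm{tail}}$ with $\mathrm{tail}=Z_{b^{(n+1)d}}\setminus Z_N$. The measure $E_{Z_{b^{(n+1)d}}}$ equals $E_{P(T_0(\omega_\star))}$ for some $\omega_\star\in L_{n+1}$ (with $v=0\in W^{n,b}_{n+1}$ trivially valid), hence is bounded by $h_{n+1}^n$. A descent along the boundary between $Z_N$ and the tail shows that $\mathrm{tail}$ splits into $b^d-1-\epsilon_j$ complete ``right-hand'' blocks of size $b^{dj}$ at each level $j=n,n-1,\ldots,0$, plus one leftover singleton $\{z^\star\}$ at which the cascade terminates, reproducing $N'=1+\sum_j(b^d-1-\epsilon_j)b^{dj}$. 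Thus
\[
E_{Z_N}\;=\;\bigl(E_{Z_{b^{(n+1)d}}}-0\bigr)\;+\;\bigl(0-E_{\{z^\star\}}\bigr)\;+\;\sum_{j=0}^n\sum_{i=1}^{b^d-1-\epsilon_j}\bigl(0-E_{A_{j,i}}\bigr)
\]
is a sum of $2+\sum_j(b^d-1-\epsilon_j)$ elements of $\{\nu_1-\nu_2\}$. Applying $f$-sublinearity of $\mathscr{D}$ over this set and bounding each summand via the appropriate $h_\bullet^n$ yields \eqref{eq_mainResultNprime}.

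The main technical obstacle is the combinatorial bookkeeping underlying the two decompositions: checking that the translation vectors $v_{j,i}$ really lie in $W^{n,b}_j$ and that the corresponding $T_{v_{j,i}}$ are valid translates, and that the tail cascade terminates with exactly one singleton plus exactly $b^d-1-\epsilon_j$ complete right-hand blocks at each level $j$. Both items follow by a short induction on $n$, but they depend on the explicit construction of $S^\Box_{b,d}$ and $S^\boxplus_{b,d}$; for this reason the proof naturally separates into the two cases treated in Sections \ref{subsec_proofThm1LmLatticeSets} and \ref{subsec_proofThm1LmPerturbed}.
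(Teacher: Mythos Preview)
Your proposal is correct and follows essentially the same approach as the paper: decompose $E_{Z_N}$ into $\sum_j\epsilon_j$ translated (perturbed) lattice blocks via the base-$b^d$ digits of $N$ and apply $f$-sublinearity for \eqref{eq_mainResultN}, then complement to $E_{Z_{b^{(n+1)d}}}$ and decompose the tail for \eqref{eq_mainResultNprime}. The only cosmetic difference is in the tail: the paper groups the first $b^d-\epsilon_0$ tail elements as individual singletons (each bounded by $h_0^n$) and then passes to blocks of size $b^{dj}$ for $j\geq 1$, whereas you keep one distinguished singleton and count the remaining $b^d-1-\epsilon_0$ as level-$0$ blocks---both groupings yield the same term count $2+\sum_j(b^d-1-\epsilon_j)$ and the same bound, and your version matches the form of \eqref{eq_mainResultNprime} more directly. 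The ``short induction'' you allude to for the translation vectors is exactly Lemma~\ref{lem_latticeTracing} (respectively Lemma~\ref{lem_latticeTracingGuided}) in the paper.
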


\begin{remark}
	Clearly $n=\lfloor\frac{\log(N)}{d\log(b)}\rfloor$ and $0\leq \epsilon_j<b^d$. $\md$ and $P$ can be arbitrary; the problem is to find upper  bounds on $h_m^n$ or $h^\infty_m$. In applications we expect $\md$ to be  a  metric measure space and $P$  to be measure preserving. Examples of such objects are found in \cite{BeltranFerizovicLopez, GriepentrogEtAl, Holhos}. 
\end{remark}

\section{Applications in Uniform Distribution Theory}\label{sec_UniforDistrThe}

Uniform distribution  refers to a special discretization of   a compact set by a collection of finite points. Some of the most common ways to quantify this property is in terms of the $L^p$--discrepancy, which we introduce below.  The start of the investigation in such matters on the unit hypercube is attributed to Weyl \cite{Weyl} (see \cite{HlawkaBinder} for a historic review). 
\begin{definition}\label{def_LpDiscrepancy}
	Let  $\chi_A$ be the indicator function of  $A\subset \md$, let $(\md,d_\md,\mu)$ be a metric measure space with $r_\md$ the diameter of $\md$,  $r_\md\leq \infty$,  and let $\mu$ be  a Borel probability measure with Borel sigma algebra $\Sigma(\md)$. A ball with center $w\in\md$ and radius $t\in(0,r_\md)$ is the set
	$$B(w,t)=\big\{x\in\md\ |\ d_\md(w,x)\leq t  \big\}.$$
	The  \textbf{$L^p$-discrepancy}  $\sqrt[p]{\lpd(Z_N)}$ of a multiset $Z_N$ of $\md$, $p\in(0,\infty)$, is defined via
	\begin{equation*}
	\lpd(Z_N)=\frac{1}{N^p}\int_{0}^{r_\md}\int_{\md} |D_{B(w,t)}(Z_N)|^p\dm(w)\dt,
	\end{equation*}
	where, with $Z_N=\{z_1,\ldots,z_N\}$ and empirical measure $E_{Z_N}$, we use the abbreviation
	\[
	D_{\star}(Z_N)= \sum_{j=1}^{N}\chi_{\star}(z_j)-N\mu(\star)=E_{Z_{N}}(\star)-E_{Z_{N}}(\md)\mu(\star).
	\]
	Furthermore, we define for a collection of $\mu$--measurable test-sets $T\subset \Sigma(\md)$
	\[
	\scd(Z_N)=\scd(Z_N,T)=\frac{1}{N}\sup_{A\in T}\big|D_{A}(Z_N)\big|.
	\]
	Often, $T=\{B(w,t)\ |\ w\in\md,\ t<r_\md\}$ and one speaks of the $L^\infty$--discrepancy in this case; it is also referred to as spherical cap discrepancy if $\md=\sdd$. 
\end{definition}

\begin{remark}
	Other versions of $L^p$-discrepancy exist where test sets might not be balls or certain weights are introduced, see for instance  Leobacher and Pillichshammer \cite{LeobacherPillichshammer}, Sloan and Woźniakowski \cite{SloanWozniakowski} or Stolarsky \cite{Stolarsky}.
\end{remark}

 On $[0,1)$ we let$\dm(x)=\mathrm{d}x$, while on $\s$, $\mu$ is the normalized surface measure.
The $L^p$-discrepancy of the unit $d$-dimensional hypercube, in contrast,  only averages over boxes with a corner in the origin, and all these different notions give rise to an admissible function-pair. For instance, we have $N^p\cdot\lpd(Z_N)=\mathscr{D}\big(E_{Z_N}\big)$ with
\[
\mathscr{D}(E_{Z_N})=\int_{0}^{r_\md}\int_{\md}\big| \big(E_{Z_{N}}-E_{Z_{N}}(\md)\mu\big)\big(B(w,t)\big)\big|^p\dm(w)\ \dt,
\] 
where $f(n)=n^{p-1}$ for $p\geq 1$ and $f\equiv 1$ for $0<p<1$. To get the mentioned $L^p$-discrepancy on $\md=[0,1)^d$, one uses integrals over side lengths of axis-parallel boxes (not metric balls).  Furthermore, if
\[
\mathscr{D}(E_{Z_N})=\sup_{A\in T} \big| \big(E_{Z_{N}}-E_{Z_{N}}(\md)\mu\big)(A)\big| \hspace{0.3cm}\mbox{then}\hspace{0.3cm} N\cdot \scd(Z_N)=	\mathscr{D}\big(E_{Z_N}\big).
\] 
\begin{corollary}
	The $L^\infty$--discrepancy of $\big(S^\Box_{b,d}\big)_N$  is $O\big(N^{(d-1)/d}\big)$ for $d>1$.
	 
\end{corollary}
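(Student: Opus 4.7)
The strategy is to apply Theorem \ref{thm_main}, Case (1), with $P=\id_{[0,1)^d}$, $\md=[0,1)^d$, $\mu$ Lebesgue, and $u=\infty$. For the star $L^\infty$-discrepancy on the hypercube one takes
\[
\mathscr{D}(E_{Z_N})=\sup_{A\in T}\bigl|(E_{Z_N}-E_{Z_N}(\md)\mu)(A)\bigr|,
\]
where $T$ is the family of corner-anchored boxes $\prod_{i=1}^d[0,a_i)$. By the discussion following Definition \ref{def_LpDiscrepancy}, $\mathscr{D}(E_{Z_N})=N\cdot\scd(Z_N)$, and the pair $(\mathscr{D},1)$ is admissible: sublinearity with $f\equiv 1$ follows from the triangle inequality applied inside the supremum, since $E_{Z_N\cup Z_M'}=E_{Z_N}+E_{Z_M'}$ as measures.

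The real content is a uniform bound $h_m^\infty\leq C_{b,d}\,b^{m(d-1)}$. In Case (1), $L_m=\{\ell_{b^m}^d\}$, so every admissible $\omega$ is a regular grid of spacing $b^{-m}$, translated by some $v\in W^{\infty,b}_m$ to remain inside $[0,1)^d$. For any box $A\in T$, a classical Gauss-type boundary count gives
\[
\bigl|\#(T_v(\ell_{b^m}^d)\cap A)-b^{md}\mu(A)\bigr|\ \leq\ C_{b,d}\,b^{m(d-1)},
\]
the error being controlled by the number of $b^{-m}$-cells pierced by the $d$ hyperplanar faces of $A$. Crucially this constant is independent of $v$, since a rigid translation does not change how many unit cells cross each coordinate slab.

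Plugging into \eqref{eq_mainResultN}, write the $b^d$-adic expansion $N=\sum_{j=0}^{n}\epsilon_j b^{dj}$ with $\epsilon_n\geq 1$. Using $f\equiv 1$ and $\epsilon_j<b^d$,
\[
\mathscr{D}(E_{Z_N})\ \leq\ \sum_{j=0}^{n}\epsilon_j\, h_j^\infty\ \leq\ C_{b,d}\, b^d\sum_{j=0}^{n} b^{j(d-1)}.
\]
For $d\geq 2$ the geometric sum is $O\!\bigl(b^{n(d-1)}\bigr)$, and since $\epsilon_n\geq 1$ forces $N\geq b^{nd}$, we get $b^{n(d-1)}\leq N^{(d-1)/d}$, which yields the claim. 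The only delicate point is confirming the boundary-count estimate with a constant uniform in $v$; this is standard, so the bulk of the work is done by Theorem \ref{thm_main} together with the admissibility check for $(\mathscr{D},1)$.
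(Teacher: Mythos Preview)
Your proof is correct and follows exactly the same approach as the paper: apply Theorem~\ref{thm_main} with $P=\id$, $\md=[0,1)^d$, and the trivial grid estimate $h_m^\infty\leq C_{b,d}\,b^{m(d-1)}$. Your version is in fact more careful than the paper's one-line sketch --- you spell out the admissibility of $(\mathscr{D},1)$, justify the boundary-count bound, and carry the geometric sum through explicitly (note the paper writes $b^{m(d-1)/d}$ for $h_m^\infty$, which appears to be a typo for $b^{m(d-1)}$; your exponent is the correct one). Your restriction to $d\ge 2$ in the final step is harmless, since for $d=1$ the sum degenerates to $O(\log N)$ and the corollary in its non-trivial reading does not hold there anyway.
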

\begin{proof}	
	We use Theorem \ref{thm_main} with $P=\id$, $\md=[0,1)^d$ and $N\cdot \scd(Z_N)=	\mathscr{D}\big(E_{Z_N}\big)$ with test-sets the axis parallel boxes with one corner at the origin. Let $A$ be an axis-parallel box with side lengths 1 in $d-1$ directions and $\epsilon$ in the remaining axis direction, i.e. volume$(A)=\mu(A)=\epsilon$, then it follows by Equation \eqref{eq_SisLatticeForAllm} that $S^\Box_{N}=b^{-m}\Z^d\cap[0,1)^d$ (with $N=b^{md}$)  satisfies $(\mu(A)=\epsilon\ra 0$)
		\[
	b^{(d-1)m}\leq	\mathscr{D}(E_{S^\Box_{N}})\leq b^{(d-1)m}
		\] 
		since any $(d-1)$-dimensional face of $[0,1)^d$ that contains the origin has $b^{(d-1)m}$-many points of the lattice set $b^{-m}\Z^d\cap[0,1)^d$ which gives the lower bound, and the upper bound follows after some thought. A similar argument applies to shifted lattice sets and implies that $h_m^\infty= b^{(d-1)m}$ and hence
	\[
	\mathscr{D}(E_{S^\Box_{N}})\leq \sum_{j=0}^n\epsilon_j b^{j(d-1)}\leq C_{b,d}b^{n(d-1)}=C_{b,d}N^{\log(b)(d-1)/\log(b^d)}
	\] 
	since $	\mathscr{D}$ is subadditive, which proves the claim.	
\end{proof}

\begin{remark}
	This corollary shows that our construction is far from optimal in the unit hypercube for $d>1$ with $P$ the identity map (see Section \ref{sec_Equidis01} for optimal values), but it is very likely that one can improve the discrepancy if $P$ is something more sophisticated.
\end{remark}
Any  $\{\omega_{N}\}_{N\in I}$ such that $\{\scd(\omega_N)\}_{N\in I}$ is a null sequence is called \textbf{asymptotically uniformly distributed}.  Fundamental results due to Beck \cite{Beck} show 
\[
cN^{-\frac{1}{2}-\frac{1}{2d}} \le \scd(\omega_N),
\hspace{0.5cm}
\mbox{and}
\hspace{0.5cm}
\scd(\omega_N^{\star}) \le CN^{-\frac{1}{2}-\frac{1}{2d}}\sqrt{\log N}
\]
for all $\omega_N$ and some  $\omega_N^{\star}$ in $\sdd$ (with positive $c,C$ and independent of $N$).
Probabilistic algorithms can construct such $\omega^\star_{N}$  with high probability (see  Alishahi and Zamani \cite{AlishahiZamani} or Beltrán, Marzo and Ortega-Cerdà \cite{BeltranMarzoOrtega}), but for no deterministic construction of point sets $\omega_{N}\subset\s$ has it been proved  that $\scd(\omega_{N})=o(N^{-1/2})$, even though candidates exists according to numerical experiments (see Aistleitner, Brauchart and Dick \cite{Aistleitner}, or Brauchart et al \cite[p.2838]{BrauchartSaffSloanWomersly}).

\vspace{0.1cm}

Yet another way to quantify uniform distribution is in terms of the Wasserstein $p$-metric $W_p(\nu,\mu)$, also known as earth-mover or Kantorovich-Rubinstein metric. We will prove bounds in a collaboration with R. Matzke  \cite{FerMatzke} based on ideas in this work, which will complement results of Brown and Steinerberger \cite{BrownSteinerberger}.

\vspace{0.1cm}

Previous works were interested in the size of the following set ($S=S^\Box_{2,1}$)
\[
\mathcal{B}(\delta,N):=\{m\in\N\ |\ m< N,\  m\cdot\scd(S_m)\leq \delta\log(m)\},
\] 
see for instance    Spiegelhofer  \cite{Spiegelhofer}, where  for $\delta=\frac{1}{100}$ an order of $\Omega\big( N^{0.056}\big)$ was established: for $g:\N\ra\R$ we say that $g(N)=\Omega(N^a)$ if  $N^a=O(g(N))$. By the very nature of our Theorem \ref{thm_main}, such bounds become much more interesting for a variety of discrepancy functions. We will thus give lower bounds for
\[
\mathcal{B}_b(\delta,N):=\{m\in\N\ |\ m< N,\  \mathrm{D}(E_{Z_m})\leq \delta\log(m)\},
\] 
where $Z$ is a sequence in $\md$, $b>1,d\geq1$ and $\mathrm{D}:\mathcal{E}_\md\ra\R$ is such that  
\begin{equation}\label{eq_mathrmDboundedMbN}
	|\mathrm{D}(E_{Z_N})|\leq C\cdot \sum_{j=0}^n\epsilon_j,\hspace{0.5cm}\mbox{for all }N=\sum_{j=0}^{n}\epsilon_j\cdot b^{dj}\mbox{ and some }C>0.
\end{equation}
By Theorem \ref{thm_ApplvdC}, the $L^p$--discrepancy of (generalized) van der Corput sequences satisfy these assumptions with $\mathrm{D}(E_{Z_N})=N \lpd^{1/p}(Z_N)$, as do Niederreiter's $(0,1)$-sequences  according to Theorem \ref{thm_ApplNiederreiter}, for all $1\leq p\leq \infty$.

\begin{theorem}\label{thm_SizeOfSetsB}
	Let $Z,C,b,d,\mathrm{D}$ satisfy Equation \eqref{eq_mathrmDboundedMbN}, then with  fixed  $\beta\geq 2$,
	\[
	\Big| \mathcal{B}_b\Big(\frac{C}{\beta\cdot  \log(b^d)},N\Big)\Big|=\Omega\Bigg(\frac{N^{h(\beta,b,d)}}{\sqrt{\log(N)}}\Bigg),\hspace{0.4cm}\mbox{for }\hspace{0.4cm}\frac{\log(N)}{\log(b^d)}\geq \nu\cdot \beta
	\]
	where $\nu$ is a big enough number independent of all variables and
	\[
	h(\beta,b,d)\cdot \log(b^d)=
	\log\Big(\beta^{\frac{1}{\beta}}\big(\frac{\beta}{\beta-1}\big)^{\frac{\beta-1}{\beta}}\Big).
	\]
\end{theorem}
\begin{remark}
	 This improves the lower bound of Spiegelhofer  to  $N^{0.05968...}$ (up to log-factors): this follows with $C=d=b-1=1$, $\beta \log(2)=100$,  $Z=S^\Box_{2,1}$ and $\mathrm{D}(E_{Z_N})=N\scd(Z_N)$.  An upper bound of the order $N^{0.183}$ is also found in \cite{Spiegelhofer}.
\end{remark}
Our approach in the proof of Theorem \ref{thm_SizeOfSetsB} encounters a combinatorial object in Equation \eqref{eq_proofBoundsBbdelta}, which we estimated rather crudely (very accurately for $b=2, d=1$, though)  and  a better bound   can be achieved by a more careful asymptotic analysis, which we omitted for sake of simplicity and brevity.

\subsection{Equidistribution on $[0,1]$}\label{sec_Equidis01}

 Weyl \cite{Weyl} started to investigate equidistributed sequences on the unit interval, usually of the type $(\alpha n)$mod 1 for $\alpha$ an irrational number. The arguably most famous sequence with low discrepancy, for which we will obtain novel bounds, is due to van der Corput \cite{vanderCorput}. For an overview on these and related questions, see the survey by Faure, Kritzer and Pillichshammer \cite{FaureKritzerPillichshammer}.
Since $S^\Box_{b,1}$ is the van der Corput sequence in base $b$, Theorem \ref{thm_main} then easily  gives upper bounds  for various discrepancies of van der Corput sequences. However, note that whenever $h_m^\infty\leq C$, the next term appears naturally in our upper bounds  with  $N=\sum_{j=0}^{n}\epsilon_j b^{j}$:
\begin{equation}\label{eq_MbN}
	M_b(N):=\min\Big\{\sum_{j=0}^n\epsilon_j,\ b(n+1)-n+1-\sum_{j=0}^n\epsilon_j  \Big\}.
\end{equation}

\begin{remark}
	If $N=\sum_{j=0}^{n}\epsilon_j b^{j}$ then the expression $\sum_{j=0}^n\epsilon_j$ is called sums of digits of $N$ in base $b$, or digit sum for short.
\end{remark}
\begin{theorem}\label{thm_ApplvdC}
	Let  $P=\id$, $\md=[0,1)$ and $S=S^\Box_{b,1}$ be as  in Section \ref{sec_SboxSequence}. With  $n=\lfloor\log(N)/\log(b)\rfloor$, $N=\sum_{j=0}^n\epsilon_jb^{j}$ and $M_b(N)$ as in Equation \eqref{eq_MbN}, we obtain 
	\[
		N\cdot	\sqrt[p]{\lpd(S_N)}\ \leq\ 
			\begin{dcases}
				\sqrt[p]{\frac{M_b(N)}{p+1}} & \text{if } 0<p< 1,\\
						\frac{M_b(N)}{\sqrt[p]{p+1}}	&  \text{if } 1\leq p<\infty,
				\end{dcases}
	\]
	and for $p=\infty$ we have $N\cdot	\scd(S_N,T)\ \leq \ M_b(N)$ for $T=\{[0,t]\ |\ 0<t<1\}$.
\end{theorem}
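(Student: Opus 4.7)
The plan is to invoke Theorem \ref{thm_main} in case (1) with $P=\mathrm{id}$, $\md=[0,1)$, $d=1$, $b$ as given, so that $L_m=\{\ell^1_{b^m}\}$ consists of a single set and $Z=S=S^\Box_{b,1}$. The three statements correspond to the three admissible function-pairs $(\mathscr{D},f)$ noted after Definition \ref{def_LpDiscrepancy}: for $L^\infty$, $\mathscr{D}(\nu)=\sup_{t}|\nu([0,t])-\nu([0,1))t|$ with $f\equiv 1$; for $1\le p<\infty$, $\mathscr{D}(\nu)=\int_0^1|\nu([0,t])-\nu([0,1))t|^p\dt$ with $f(n)=n^{p-1}$; for $0<p<1$ the same integral expression with $f\equiv 1$. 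All that is needed is to bound the single quantity $h_m^\infty$ in each case, after which \eqref{eq_mainResultN} and \eqref{eq_mainResultNprime} give the two halves of $M_b(N)$.

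The key computation is an explicit integration on a translate of the equispaced lattice. For $v\in W^{\infty,b}_m\subset[0,b^{-m})$ valid, the multiset $T_v(\ell^1_{b^m})$ consists of the points $v+j/b^m$, $j=0,\dots,b^m-1$. Writing $\alpha:=1-b^m v\in(0,1]$, a direct case split (whether $t<v$ or $t\ge v$) shows that
\[
D_{[0,t]}(T_v\ell^1_{b^m})\in\{-b^mt\}\cup\{\alpha-\{b^m(t-v)\}\},
\]
so $|D_{[0,t]}|\le 1$ uniformly in $t$. For the $L^\infty$ case this yields $h_m^\infty\le 1$ immediately. For $L^p$, splitting the integral at $t=v$, substituting $u=b^m(t-v)$, and using that on each unit block $[k,k+1]$ one has $\int_0^1|\alpha-x|^p\,dx=\tfrac{\alpha^{p+1}+(1-\alpha)^{p+1}}{p+1}$, the pieces telescope to
\[
\int_0^1|D_{[0,t]}|^p\dt=\frac{\alpha^{p+1}+(1-\alpha)^{p+1}}{p+1}\;\le\;\frac{1}{p+1},
\]
where the final bound uses $\alpha^{p+1}+(1-\alpha)^{p+1}\le\alpha+(1-\alpha)=1$ since $p+1\ge 1$. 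Hence $h_m^\infty\le 1/(p+1)$ for every $0<p<\infty$.

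Next I plug these bounds into the two estimates of Theorem \ref{thm_main}. Inequality \eqref{eq_mainResultN} with $h_j^\infty\le 1/(p+1)$ (or $\le 1$ for $p=\infty$) and the appropriate $f$ gives, after pulling the constant out:
\[
\mathscr{D}(E_{S_N})\;\le\;\frac{f\bigl(\textstyle\sum\epsilon_j\bigr)\cdot\sum\epsilon_j}{p+1}\;=\;\frac{\bigl(\sum\epsilon_j\bigr)^{p}}{p+1}\qquad(1\le p<\infty)
\]
(with obvious modifications for $0<p<1$ and for $p=\infty$). Applying \eqref{eq_mainResultNprime} with the same bounds and noting that $2+\sum_{j=0}^n(b-1-\epsilon_j)=b(n+1)-n+1-\sum\epsilon_j$ produces the symmetric estimate with $\sum\epsilon_j$ replaced by $b(n+1)-n+1-\sum\epsilon_j$. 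Taking the smaller of the two gives $\mathscr{D}(E_{S_N})\le M_b(N)^{p}/(p+1)$ for $1\le p<\infty$, and analogously $M_b(N)/(p+1)$ for $0<p<1$ and $M_b(N)$ for $p=\infty$.

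Finally, extracting $p$-th roots via $N\sqrt[p]{\lpd(S_N)}=\bigl(\mathscr{D}(E_{S_N})\bigr)^{1/p}$ yields the three cases of the theorem. The only real work is the explicit $L^p$ computation for the shifted equispaced lattice; everything else is routine bookkeeping with the bounds already packaged by Theorem \ref{thm_main}. I expect the main subtlety to be verifying that the same $h_m^\infty$ estimate applies to every valid translate $v\in W^{\infty,b}_m$, which is precisely what the parametrisation by $\alpha=1-b^m v$ achieves uniformly.
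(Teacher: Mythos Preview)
Your proposal is correct and follows essentially the same route as the paper: both reduce to computing $h_m^\infty$ for the shifted equispaced lattice $T_v(\ell^1_{b^m})$, obtain the identical closed form $\frac{\alpha^{p+1}+(1-\alpha)^{p+1}}{p+1}\le \frac{1}{p+1}$ (the paper parametrises by $v_p=b^m v=1-\alpha$), and then feed these bounds into \eqref{eq_mainResultN} and \eqref{eq_mainResultNprime} of Theorem~\ref{thm_main} to produce the two halves of $M_b(N)$. Your observation that $2+\sum_{j=0}^n(b-1-\epsilon_j)=b(n+1)-n+1-\sum\epsilon_j$ matches the second entry of $M_b(N)$ exactly; the paper leaves this bookkeeping implicit.
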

\begin{remark}
	For $b=2$, $p=\infty$ we have $M_b(N)\leq n/2+3/2\leq \onehalf \log(N)/\log(2)+3/2$, which is half the upper bound as obtained in van der Corput's original result, and  close to the optimal value $\frac{1}{3}\log(N)/\log(2)+1$  due to Béjian and Faure \cite{BejianFaure}. 
	For $b=p=2$  we have an upper bound of $ \log(N)/\big(\log(2)2\sqrt{3}\big)+\sqrt{3}/2$, which again is close to the optimal value (for any $p\geq 1$) of $ \log(N)/\big(\log(2)6\big)+1$ (see Pillichshammer \cite{Pillichshammer}). 
	The case $0<p<1$ is completely new for any $b>1$  and the only other work we know that gives bounds for some point sets and  $p$ in this range is by Skriganov \cite{Skriganov}.
\end{remark}

The optimal order for any sequence and $1\leq p<\infty$ is  $\sqrt{\log(N)}$ (see \cite{FaureKritzerPillichshammer}).  
 Faure introduced a generalized version of van der Corput sequences in \cite[p.769]{FaureKritzerPillichshammer}, which is covered by our definition of $S^\Box_{b,1}$ too. Thus Theorem \ref{thm_ApplvdC} also applies to these sequences, and the built-in symmetry of $M_b(N)$ gives much better bounds for $N$ with small/large digit sums. Such symmetric behavior for $\scd$ is known (see  \cite[Fig.3]{FaureKritzerPillichshammer}) but we are not aware of any result remotely comparable to ours. Digit sums also appear in a  work of Beretti \cite{Beretti} with a different discrepancy function, which is also covered by our methods.

\begin{theorem}\label{thm_ApplNiederreiter}
	With the same setup as in  Theorem \ref{thm_ApplvdC}, but with $S=S^\boxplus_{b,1}$  as constructed in Section \ref{subsec_GuidedSequences},	we obtain 
	\[
	N\cdot	\sqrt[p]{\lpd(S_N)}\ \leq\ 
	\begin{dcases}
		2\cdot\sqrt[p]{\frac{M_b(N)}{p+1}} & \text{if } 0<p< 1,\\
		2\cdot\frac{M_b(N)}{\sqrt[p]{p+1}}	&  \text{if } 1\leq p<\infty,
	\end{dcases}
	\]
	and for $p=\infty$ we have $N\cdot	\scd(S_N,T)\ \leq \ 2M_b(N)$ for $T=\{[0,t]\ |\ 0<t<1\}$.
\end{theorem}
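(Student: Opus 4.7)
The plan is to apply Theorem \ref{thm_main} in its second regime, so that $L_m$ ranges over perturbed lattice sets of resolution $b^m$ and $\mathscr{Z}^u$ is the class of $S^\boxplus_{b,1}$--sequences. We fix the admissible pair $(\mathscr{D},f)$ corresponding to the $L^p$--discrepancy on $[0,1)$: for $1 \le p < \infty$ one takes $\mathscr{D}(E_{Z_N}) = \int_0^1 |D_{[0,t]}(Z_N)|^p\,\mathrm{d}t$ with $f(n)=n^{p-1}$, for $0<p<1$ the same $\mathscr{D}$ with $f\equiv 1$, and for $p=\infty$ the supremum $\mathscr{D}(E_{Z_N})=\sup_t|D_{[0,t]}(Z_N)|$ with $f\equiv 1$. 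The overall structure is identical to the proof of Theorem \ref{thm_ApplvdC}; only the bound on $h_m^n$ changes from the van der Corput case.

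The heart of the argument is the uniform estimate $h_m^n \le 2^p/(p+1)$ for $1 \le p < \infty$, with $h_m^n \le 2/(p+1)$ for $0<p<1$ and $h_m^n \le 2$ for $p = \infty$. For a perturbed lattice set $\omega = \{k/b^m + \delta_k\}_{k=0}^{b^m-1}$ with $\delta_k \in [0, b^{-m})$ and a valid translate $v \in W_m^{\infty,b}$, one has the identity $D_{[0,t]}(\omega+v) = D_{[0,t-v]}(\omega) - b^m v$ on $[v,1)$ and $D_{[0,t]}(\omega+v) = -b^m t$ on $[0,v)$. Combined with the estimate $|D_{[0,s]}(\omega)| \le 1$ for a perturbed lattice set and $b^m v < 1$, this yields the pointwise bound $|D_{[0,t]}(\omega+v)| \le 2$ that settles $p=\infty$. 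For $p < \infty$ one carries out a cellwise integration on the intervals $[p_{k-1}, p_k)$, where $p_k = k/b^m + \delta_k + v$: $D$ is linear with slope $-b^m$ on each such interval, taking the values $1 - \xi_{k-1}$ and $-\xi_k$ at the endpoints (with $\xi_k := b^m(\delta_k+v)$). Telescoping across the $b^m$ cells under the validity constraint $\xi_{b^m-1} < 1$ gives the desired $L^p$--bound.

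With this estimate on $h_m^n$ in hand, bound \eqref{eq_mainResultN} of Theorem \ref{thm_main} gives $\mathscr{D}(E_{S_N}) \le (2^p/(p+1))\cdot \bigl(\sum_j \epsilon_j\bigr)^p$ for $p \ge 1$, while bound \eqref{eq_mainResultNprime} (which is available because the $L^p$--discrepancy function is $f$--sublinear on signed differences of empirical measures, as noted after Definition \ref{def_LpDiscrepancy}) yields the complementary $\mathscr{D}(E_{S_N}) \le (2^p/(p+1))\cdot (b(n+1)-n+1-\sum_j \epsilon_j)^p$. Taking the minimum of these two estimates produces $\mathscr{D}(E_{S_N}) \le (2^p/(p+1))\cdot M_b(N)^p$, and extracting the $p$--th root is exactly the claimed inequality $N\sqrt[p]{\lpd(S_N)} \le 2 M_b(N)/\sqrt[p]{p+1}$. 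The case $p=\infty$ proceeds identically, replacing integrals by $\sup$ and dropping the $(p+1)$ factor, and the case $0<p<1$ uses $f\equiv 1$ throughout.

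The main obstacle is the cellwise integration leading to the constant $2^p/(p+1)$. A direct Minkowski-type argument only yields a constant like $2^{p+1}/(p+1)$; the improvement requires tracking the cancellation between cells where $\delta_k + v > 1/b^m$ (so the $k$--th perturbed point has crossed into the next cell under translation, contributing a term $-(\xi_{k-1}-1)^{p+1}$ in the $L^p$--expansion) and cells where $\delta_k + v \le 1/b^m$ (contributing $+(1-\xi_{k-1})^{p+1}$). The validity constraint $\xi_{b^m-1} < 1$ is essential here, as it forces the last cell to always be of the second type, closing off the telescoping sum.
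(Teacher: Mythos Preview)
Your overall framework coincides with the paper's: estimate $h_m^\infty$ uniformly and feed this into both inequalities \eqref{eq_mainResultN} and \eqref{eq_mainResultNprime} of Theorem~\ref{thm_main}, then take the minimum to produce $M_b(N)$. Your treatment of $p=\infty$ via the identity $D_{[0,t]}(\omega+v)=D_{[0,t-v]}(\omega)-b^m v$ together with $|D_{[0,s]}(\omega)|\le 1$ is correct and a bit more explicit than the paper's one-line claim $\scd(T_v(\omega))\le 2b^{-m}$. For finite $p$ the two arguments diverge in technique: you propose a cellwise integration over $[p_{k-1},p_k)$ followed by a telescoping sum, whereas the paper compares $T_v(\omega)$ to an explicit ``doubled'' limit configuration $\omega''=\{0,0,2b^{-m},2b^{-m},\ldots\}$ and computes $\lpd(\omega'')=2^p b^{-mp}/(p+1)$ directly, asserting without details that $\lpd(T_v(\omega))\le\lpd(\omega'')$.

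The crucial step---the bound $h_m^\infty\le 2^p/(p+1)$---is where your sketch breaks down, and in fact the bound itself is false for the full class $\Omega_m^\infty$ of Theorem~\ref{thm_main}. Take $b^m=3$, $\omega=\{\tfrac13-\varepsilon,\ \tfrac23-\varepsilon,\ \tfrac23\}$ and $v=\tfrac13-\varepsilon$ (a valid translate with $v\in W^{\infty,b}_1$ for $b$-adic $\varepsilon$); as $\varepsilon\to 0$ one gets $\int_0^1|D_{[0,t]}(T_v(\omega))|\,\dt\to \tfrac76>1=2^1/(1+1)$. In your notation the cell contributions are $\xi_k^{p+1}+\sgn(1-\xi_k)\,|1-\xi_k|^{p+1}$, each depending only on its own $\xi_k\in[b^mv,\,1+b^mv)$; for $k<b^m-1$ these are essentially independent, so no genuine telescoping or cross-cell cancellation occurs, and the validity constraint $\xi_{b^m-1}<1$ controls only one term. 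The paper's comparison $\lpd(T_v(\omega))\le\lpd(\omega'')$ fails on the same example ($\lpd(\omega'')=\tfrac5{18}<\tfrac7{18}$ for $p=1$), so this is a shared gap rather than one specific to your route. Separately, your stated target $h_m^n\le 2/(p+1)$ for $0<p<1$ would give leading constant $2^{1/p}>2$ after the $p$-th root, not the claimed $2$; the target consistent with the statement is again $2^p/(p+1)$.
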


 One could also obtain similar bounds for the  \textit{diaphony} of our 1--dimensional sequences, which can be written as a double integral (see Faure \cite[p.127]{Faure}).

\subsection{Equidistribution on $\s$}

The problem of distributing  points uniformly on a sphere has applications in numerical integration, approximation,  and the applied sciences (see for instance \cite{Bauer,BrauchartGrabner,Gorski,KuijlaarsSaff}). There are many constructions that produce sequences of point sets $\{\omega_{N}\}_{N\in I}$, for $I\subset \N$ with $|I|=\infty$ and  $\omega_N\subset\sdd$, to satisfy some desirable properties like  provably small bounds on their energies (Riesz, Logarithmic) or their discrepancies (see  \cite{Aistleitner,AlishahiZamani,BeltranEtayo,BeltranEtayoLopez,Etayo,Ferizovic,FerHofMas,Hardin,LubotzykPhillipsSarnak}).

 \vspace{0.2cm}
 
 For such constructions  and two different $N,N'\in I$, one usually finds that $\omega_{N}\cap \omega_{N'}=\emptyset$, which  makes them  computationally expensive if point evaluations are involved. Sequences, on the other hand, do not suffer this drawback and should prove useful in numerical applications. Yet,  works that \textit{prove} results for  sequences on $\sdd$ seem  rare. One such result is given below.
 
 \vspace{0.2cm}
 
 \textbf{Theorem A} (\cite{ArnoldKrylov} Arnol'd \& Krylov '63)  Given two rotations $A,B$ and some $x\in\s$, let $Z=(x,Ax,Bx,A^2x,ABx,BAx,B^2x,\ldots)$. If the set induced by $Z$ is dense, then the sequence $Z$  is asymptotically uniformly distributed.
 
 \vspace{0.2cm}
 Furthermore, around the 1950's, researchers like Edrei, Leja, and G\'{o}rski investigated sequences that were constructed via an optimization strategy and this greedy construction was applied on spheres in the work by López-García \& McCleary \cite{LopGarciaMcCleary}, where the reader can find more background information. 
%
 
  \vspace{0.2cm}
 
 On $\s$, we can bound the $L^\infty$--discrepancy of our sequences, which are much easier and much faster to generate than greedy sequences:

 \begin{theorem}\label{thm_AppS2}
 		Fix $b\in\N_{>1}$ and let $\mathscr{Z}=\big\{ L(S^\boxplus_{b,2})\ |\ S^\boxplus_{b,2}\mbox{ as in Section \ref{subsec_GuidedSequences}} \big\}$ where  $L$ is the Lambert cylindrical equal-area projection. Then for any $Z\in\mathscr{Z}$ we have, with some constant $C'=C'(b)$ independent of $N=\sum_{j=0}^{n}\epsilon_j\cdot b^{2j}$,
 		\begin{align*}
 			\scd(Z_N)\ &\leq\ \frac{2+3\sqrt{2}}{N}
 			\min\Big\{\sum_{j=0}^n\epsilon_jb^j,\ b^{n+1}(b+2)-\sum_{j=0}^n\epsilon_jb^j  \Big\} +\frac{C'}{N}\log(N)\\
 			&\leq\ \frac{2+3\sqrt{2}}{2} \frac{b(b+2)}{\sqrt{N}} +\frac{C'}{N}\log(N).
 		\end{align*}
 \end{theorem}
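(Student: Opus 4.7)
The plan is to apply Theorem \ref{thm_main} in case (2), with $P=L$ the Lambert cylindrical equal-area projection, $d=2$, $\md=\s$, and the admissible function-pair $(\mathscr{D},f)$ given by
\[
\mathscr{D}(E_{Z_N}) := \sup_{A\in T}\bigl|\bigl(E_{Z_N}-E_{Z_N}(\s)\mu\bigr)(A)\bigr| = N\cdot\scd(Z_N,T),
\]
where $T$ is the family of spherical caps, and $f\equiv 1$. Since the supremum of $|(\cdot)(A)|$ satisfies the triangle inequality, $\mathscr{D}$ is $1$-sublinear on $\mathcal{E}_\s$ and on its difference set, so both bounds \eqref{eq_mainResultN} and \eqref{eq_mainResultNprime} of Theorem \ref{thm_main} are available, and the minimum of the two will yield the symmetric $\min\{\cdot,\cdot\}$ appearing in the statement.

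The heart of the proof is a per-level estimate of the form
\[
h_m^{\infty} \le 7\sqrt{2}\, b^m + C''(b),\qquad m\ge 0.
\]
Because $L$ is area-preserving, it sends the canonical tiling of $[0,1)^2$ into $b^{2m}$ axis-parallel squares of area $b^{-2m}$ to a tiling of $\s$ into $b^{2m}$ equal-area Lambert cells, and every valid translate of a perturbed lattice set $\omega$ of resolution $b^m$ places exactly one point inside each Lambert cell. For any spherical cap $C$, the discrepancy $|D_C|$ is therefore bounded by the number of Lambert cells whose images meet $\partial C$. A perimeter-versus-width argument, using that $\partial C$ has cylindrical arclength at most $2\pi$ while Lambert cells have width $b^{-m}$ in the cylindrical coordinate, gives a bulk bound of order $b^m$ with explicit constant $7\sqrt{2}$; contributions from the two polar rows, where longitudinal cell widths blow up, are absorbed into the additive $C''(b)$ since any cap boundary meets each polar row $O(1)$ times. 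This geometric step is in the spirit of the cap-counting arguments of \cite{Aistleitner} and \cite{BeltranFerizovicLopez}.

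With this estimate, the theorem becomes a routine computation. Bound \eqref{eq_mainResultN} immediately yields $N\scd(Z_N)\le 7\sqrt{2}\sum_{j=0}^n\epsilon_j b^j + C''\sum_{j=0}^n\epsilon_j$; the second sum is $O(\log N)$ because $n\le \log N/\log(b^2)$ and $\epsilon_j<b^2$, giving the residual $(C'/N)\log N$. For \eqref{eq_mainResultNprime} with $d=2$, expand
\[
\sum_{j=0}^n(b^2-1-\epsilon_j)b^j = (b+1)(b^{n+1}-1) - \sum_{j=0}^n\epsilon_j b^j
\]
and add $h_{n+1}^n+h_0^n\le 7\sqrt{2}(b^{n+1}+1)+O(1)$ to obtain $7\sqrt{2}\bigl(b^{n+1}(b+2)-\sum_{j=0}^n\epsilon_j b^j\bigr)$ up to the same logarithmic additive correction. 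Taking the minimum of the two candidates gives the first displayed inequality. For the second inequality, apply $\min(A,B-A)\le B/2$ with $B=b^{n+1}(b+2)$, combined with $b^{n+1}\le b\sqrt{N}$ (which follows from $N\ge b^{2n}$), to convert $b^{n+1}(b+2)/(2N)$ into $b(b+2)/(2\sqrt{N})$ and recover the advertised constant $7/\sqrt{2}$.

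The principal obstacle is the geometric cap-counting estimate: producing the sharp constant $7\sqrt{2}$ for a generic valid translate of a perturbed lattice, and ensuring that the polar distortion of Lambert contributes only an $O(1)$ additive term per level rather than a growing logarithmic factor, so that the aggregate error across the $O(\log N)$ levels remains of size $(C'/N)\log N$ as claimed.
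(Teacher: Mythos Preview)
Your overall strategy matches the paper's exactly: apply Theorem~\ref{thm_main} in case~(2) with $(\mathscr{D},f)=(N\scd,1)$, feed in a per-level bound $h_m^\infty\le 7\sqrt{2}\,b^m+C$, and then read off both \eqref{eq_mainResultN} and \eqref{eq_mainResultNprime}; your algebra (the geometric-series identity, the $\min(A,B-A)\le B/2$ step, and $b^{n+1}\le b\sqrt{N}$) is correct and reproduces the stated constants.

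The one substantive difference is the source of the per-level bound. The paper does not prove it by a direct cap-counting argument; it quotes Theorem~C of \cite{Ferizovic} with $Q=\id$, reading off $d^Q(K)=0$, $C_L^Q\le 3$, $M^Q(K)\le 4\sqrt{2}+20/K$, whence $7\sqrt{2}=\sqrt{2}\cdot 3+4\sqrt{2}$. Your sketch (``perimeter at most $2\pi$ versus cell width $b^{-m}$, polar rows contribute $O(1)$'') is morally the content of that theorem, but as written it does not produce the specific constant $7\sqrt{2}$, nor does it verify that the polar distortion really stays $O(1)$; those are exactly the computations carried out in \cite{Ferizovic}. So your proposal is correct as a plan, but the ``principal obstacle'' you flag is resolved in the paper by citation rather than by a self-contained geometric argument, and if you want to avoid the citation you would need to reproduce that analysis to justify the constant.
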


Theorem \ref{thm_main} also  applies if the domain of $P$ is a union of copies of $[0,1)^d$, as is the case with the HEALPix map, see \cite{FerHofMas,Gorski} which is a building block to construct the map $G$ in the next theorem.

\begin{theorem}\label{thm_AppHealpix}
		Let $\mathscr{Z}=\big\{ G(S^\Box_{2,2})\ |\ S^\Box_{2,2}\mbox{ as in Section \ref{sec_SboxSequence} } \big\}$ where  $G$ is as in Equation \eqref{eq_defGprojection}. Then for any $Z\in\mathscr{Z}$ we have, with some constant $C'$ independent of $N=12\cdot K+k$, $0\leq k<12$,  $\kappa=\lfloor\log(K)/\log(4)\rfloor$,  and  $K=\sum_{j=0}^{\kappa}\epsilon_j\cdot 2^{2j}$
			\begin{align*}
			\scd(Z_N)\ &\leq\ 4 \frac{5+\sqrt{2}}{N}
			\min\Big\{\sum_{j=0}^\kappa\epsilon_j2^j,\ 2^{\kappa+3}-\sum_{j=0}^\kappa\epsilon_j2^j  \Big\} +\frac{C'}{N}\log(N)\\
			&\leq\  \frac{8}{\sqrt{3}}\frac{5+\sqrt{2}}{\sqrt{N}} +\frac{C'}{N}\log(N).
		\end{align*}
\end{theorem}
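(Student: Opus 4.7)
The plan is to apply Theorem \ref{thm_main} in case (1) (the $S^\Box_{b,d}$ construction) with $b=2$, $d=2$, and the admissible pair $(\mathscr{D},f)$ defined by $\mathscr{D}(E_Z)=|Z|\cdot\scd(Z,T)$, where $T$ is the family of spherical caps on $\s$; this pair is admissible with $f\equiv 1$ by the triangle inequality for the supremum norm on signed measures. The projection $G$ is defined on a union of $12$ translates of $[0,1)^2$ (the base pixels), so the sequence $Z=G(S^\Box_{2,2})$ cyclically visits these $12$ faces before each refinement step. Writing $N=12K+k$ with $0\le k<12$ and $K=\sum_{j=0}^n\epsilon_j\cdot 4^j$, I reduce the estimate to $12$ parallel prefixes, one inside each base pixel, each a prefix of $G|_{\text{pixel}}(S^\Box_{2,2})$ of length $K$ or $K+1$; the leftover $k<12$ points contribute $O(1)$ per pixel and are absorbed into the $C'\log(N)/N$ term.

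The essential geometric input is the bound
\[
h_m^\infty \ \le\ 4(5+\sqrt{2})\cdot 2^m \ +\ C''\log(2^m),
\]
for the spherical cap discrepancy of the pixelization produced by applying $G$ to translates of $\ell^2_{2^m}$ across all $12$ base pixels. I prove it by the standard boundary-pixel counting argument: $G$ sends each axis-aligned cell of side $2^{-m}$ in a base pixel to an equal-area curvilinear cell on $\s$ of Euclidean diameter $O(2^{-m})$, yielding $12\cdot 4^m$ equal-area cells on $\s$. The boundary of any spherical cap is a single planar circle of length $\le 2\pi$ and therefore crosses at most $4(5+\sqrt{2})\cdot 2^m$ such cells, each contributing at most $1$ in absolute value to $|D_{\text{cap}}(Z)|$. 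The logarithmic term absorbs finite-size corrections at the coordinate singularities of $G$.

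Feeding this estimate into both inequalities \eqref{eq_mainResultN} and \eqref{eq_mainResultNprime} of Theorem \ref{thm_main} (with $b^d=4$ and $f\equiv 1$) and taking the minimum of the two resulting bounds gives
\[
N\cdot\scd(Z_N)\ \le\ 4(5+\sqrt{2}) \min\Big\{\sum_{j=0}^n\epsilon_j 2^j,\ 2^{n+1}\cdot 4-\sum_{j=0}^n\epsilon_j 2^j\Big\} + C'\log(N),
\]
after the inert constants from the $N'$-expansion are collected into $C'$; this is the first inequality of the theorem. For the second inequality, I use $\min\{\Sigma,\,4\cdot 2^{n+1}-\Sigma\}\le 2\cdot 2^{n+1}=4\cdot 2^n$ together with $N\ge 12\cdot 4^n$, so that $2^n/N\le 1/(2\sqrt{3N})$, which produces the stated $\frac{8}{\sqrt{3}}(5+\sqrt{2})/\sqrt{N}$ bound.

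The main obstacle is the sharp geometric lemma establishing $h_m^\infty\le 4(5+\sqrt{2})\cdot 2^m+O(\log 2^m)$ with the explicit constant $4(5+\sqrt{2})$. The order $O(2^m)$ is immediate from the equal-area property of $G$ combined with the pixel-diameter estimate, but pinning down the numerical constant requires a case analysis across the distinct pixel shapes in the polar and equatorial regions of the image of $G$, together with a worst-case orientation analysis of the cap's boundary circle against the principal grid directions (the $\sqrt{2}$ factor arises naturally from the $45^\circ$ tilt of the equatorial cells). A secondary technicality is the extension of Theorem \ref{thm_main} to a union-of-cubes domain, which follows by applying the theorem to each of the $12$ base pixels and summing via the subadditivity of $\mathscr{D}$ with $f\equiv 1$.
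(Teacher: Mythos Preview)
Your approach is essentially the paper's: define $\mathscr{D}(E)=|E|\cdot\scd$ with $f\equiv 1$, split $N=12K+k$, absorb the stray $k$ points, and feed a bound of the form $h_m^\infty\le 4(5+\sqrt{2})\,2^m+O(1)$ into both inequalities of Theorem~\ref{thm_main} with $b^d=4$; the second inequality then follows from $\sqrt{12}\cdot 2^n\le\sqrt{N}$.

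Two points deserve attention. First, what you flag as the ``main obstacle'' --- the sharp constant $4(5+\sqrt{2})$ in the boundary-crossing count for HEALPix cells --- is not something you need to redo: it is exactly the content of Theorem~D, quoted from \cite{FerHofMas}. The paper does not reprove it; it simply argues that the proof there (area contribution plus boundary contribution) goes through verbatim for valid translates $T_v(\ell^2_{2^m})$, yielding $h_m^\infty\le 4(5+\sqrt{2})\,2^m+1000$. Your pixel-counting sketch is the right intuition, but the case analysis you anticipate is already in that reference.

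Second, your lower-order term $C''\log(2^m)$ in $h_m^\infty$ is too pessimistic and does not give the theorem as stated: summing $\sum_{j=0}^n\epsilon_j\cdot C''j$ with $\epsilon_j\le 3$ produces $O(n^2)=O(\log^2 N)$, not $O(\log N)$. You need the lower-order term in $h_m^\infty$ to be $O(1)$ (as Theorem~D provides, with the constant $1000$), so that $\sum_j\epsilon_j\cdot O(1)=O(n)=O(\log N)$. There is no ``coordinate singularity'' correction of logarithmic size here; the HEALPix map is a genuine equal-area bijection with uniformly controlled cell diameters, and the $O(1)$ error already accounts for all boundary effects.
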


\section{Fundamental Sequences and Theorem \ref{thm_main} }\label{sec_Theorem1SBox}

We will next present an algorithm to construct many  sequences  in $[0,1)^d$ which  form the backbone of this work and as we recently learned appear implicitly in the work of Infusino and Volčič \cite{InfusinoVolcic}. In Figure \ref{fig:Thm1Visualization} we present a visual proof for Theorem \ref{thm_main}.

\subsection{The Sequences $S^{\Box}_{b,d}$}\label{sec_SboxSequence}

 We will  index a sequence in $[0,1)^d$ by starting with 0, thus $S=(s_0,s_1,\ldots)$ and $S_N=(s_0,\ldots,s_{N-1})$.
\begin{definition}\label{def_sbox}
	The sequence $S=S^{\Box}_{b,d}$ in base $b\in\N$, with $b>1$, is defined recursively: Set $s_0=\vec{0}$. Suppose the first $b^{dm}$ elements have been defined, then $S_{b^{d(m+1)}}$ is obtained by concatenating $b^d-1$ shifted versions of $S_{b^{dm}}$ to itself as follows: let  $\{H_1,\ldots,H_{b^d-1}\}=\{0,\ldots,b-1\}^d\setminus\{\vec{0}\}$, let $(\pi_1,\pi_2,\ldots)$ be a sequence of permutations of $(1,\ldots,b^d-1)$ and set
	\[
	\big(s_{k\cdot b^{dm}},\ldots,s_{(k+1)\cdot b^{dm}-1}\big):=
	b^{-m-1}H_{\pi_m(k)}+\big(s_{0},\ldots,s_{b^{dm}-1}\big),
	\]
	for $k\in\{1,2,,\ldots,b^d-1\}$.
\end{definition}
\begin{remark}
	Adding $b^{-m-1}H_{\pi_m(k)}$ to $S_{b^{dm}}$ is done element-wise. 
\end{remark}
We will use the short-hand $S^{\Box}=S^{\Box}_{b,d}$ and note that
\begin{equation}\label{eq_SisLatticeForAllm}
	S^\Box_{b^{dm}}=\ell^d_{b^m}=\big(b^{-m}\Z^d\big)\cap [0,1)^d\hspace{0.5cm}\mbox{for all }m\in\N_0, \mbox{ and all }(\pi_1,\pi_2,\ldots).
\end{equation}
\begin{figure}[h!]
	\begin{center}
		\includegraphics[scale=0.36]{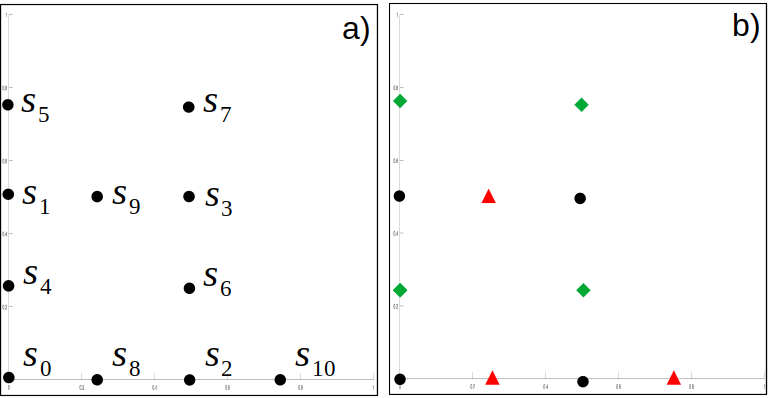}
		\caption{ The first 11 elements of  a possible choice for $S^{\Box}_{2,2}$ in \textrm{a}). In \textrm{b}) we see the underlying construction via shifting $S_{2^2}$ clearly, and its order is seen in \textrm{a}).}
		\label{fig:Sbox}
	\end{center}
\end{figure}
\begin{remark}
	 If $d=1$ and $\pi_m=\fatone$ the identity, we obtain  van der Corput's sequence in base $b$: a proof for the bit-reversal  in case $b=2$ follows  from \cite[Equ.(23)]{Wang} - note that in the paper's notation $J_4=(0,2,1,3)$ refers in our case to $s_0<s_2<s_1<s_3$, and $s_n$ is the bit-reversal of $n$ in base (radix) 2. 
	 	For other $b$ we refer to a small paragraph of page 52 in \cite{InfusinoVolcic}. General $\pi_m$ yield  the generalized version of van der Corput sequences of   \cite[p.769]{FaureKritzerPillichshammer}.
\end{remark}

\begin{remark}
	In our eyes this algorithm is very natural and it so happens that we have rediscovered it.  Infusino and  Volčič give a construction for fractal sets which also applies to $[0,1)^d$, see Section 2 of \cite{InfusinoVolcic}. They don't consider sequences of permutations as we do and they compute discrepancies with restricted test sets coming from their construction, but then again their algorithm is very general and also applies to triangles as done by Basu \& Owen in \cite{BasuOwen}.
\end{remark}

The next lemma stresses the fractal structure of  $S^\Box$ and explains the appearance of  the sets $W^{m,b}_t$ in Theorem \ref{thm_main}. We recall the convention that  $\sum_{j=t}^{n}a_j:=0$ if $n<t$ and $a_j$ arbitrary.

\begin{lemma}\label{lem_latticeTracing}
	Let  $m\in\N$ and $(\varepsilon_1,\ldots,\varepsilon_m)\in\{0,1,\ldots,b^d-1\}^m$  be chosen arbitrarily.
	For $t\in\N$ set 
	\[
				K(t)=	K_{m,\varepsilon_{j}}(t)=\sum_{r=t}^m\varepsilon_r b^{dr}.
	\]
%
	Then there is a vector $v_{m,\varepsilon_{j}, \pi_j}(t)\in[0,b^{-t})^d\cap \big(\Z^d\cdot b^{-m-1}\big)$,  such that for  $k<b^{dt}$ 
	\[
	\big(s_{K(t)},\ldots,s_{K(t)+k}\big)=v_m(t)+ \big(s_{0},\ldots,s_{k}\big)=v_m(t)+ S^\Box_{k+1}.
	\]
\end{lemma}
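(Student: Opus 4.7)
The plan is to prove the lemma by descending induction on $t$ for fixed $m$, exploiting the block self-similarity baked into the recursive Definition \ref{def_sbox}. Roughly, the sequence $S^\Box$ is a nested sequence of shifted copies, and $K(t)$ is designed to be exactly the starting position of one such shifted copy of $S^\Box_{b^{dt}}$; the vector $v_m(t)$ is then the cumulative shift accrued when descending through the levels from $m+1$ down to $t$.

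For the base case $t=m$, the definition gives
\[
\big(s_{\varepsilon_m b^{dm}},\ldots,s_{(\varepsilon_m+1)b^{dm}-1}\big)=b^{-m-1}H_{\varepsilon_m}+S^\Box_{b^{dm}},
\]
where we adopt the convention $H_0:=\vec{0}$ to cover the unshifted block $\varepsilon_m=0$. Setting $v_m(m):=b^{-m-1}H_{\varepsilon_m}$ and noting that $H_{\varepsilon_m}\in\{0,\ldots,b-1\}^d$ yields $v_m(m)\in [0,b^{-m})^d\cap(b^{-m-1}\Z^d)$, and proves the claim for $k<b^{dm}$.

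For the inductive step, suppose the conclusion holds at level $t$ and deduce it at $t-1$. Split $K(t-1)=K(t)+\varepsilon_{t-1}b^{d(t-1)}$, and for $j\le k<b^{d(t-1)}$ set $i:=\varepsilon_{t-1}b^{d(t-1)}+j$. Since $\varepsilon_{t-1}\le b^d-1$, we have $i<b^{dt}$, so the inductive hypothesis gives $s_{K(t)+i}=v_m(t)+s_i$. Applying the construction one level lower, at the step from $b^{d(t-1)}$ to $b^{dt}$ elements, position $i$ lies in block $\varepsilon_{t-1}$, hence $s_i=b^{-t}H^{(t)}_{\varepsilon_{t-1}}+s_j$ (with $H^{(t)}$ the permutation used at that step, extended by $H^{(t)}_0:=\vec{0}$). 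Combining,
\[
s_{K(t-1)+j}=v_m(t)+b^{-t}H^{(t)}_{\varepsilon_{t-1}}+s_j,
\]
so define $v_m(t-1):=v_m(t)+b^{-t}H^{(t)}_{\varepsilon_{t-1}}$. Membership is then routine: both summands lie in $b^{-m-1}\Z^d$ since $t\le m+1$, and componentwise the sum lies in $[0,b^{-t})+[0,(b-1)b^{-t}]\subset[0,b^{-(t-1)})$.

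The only real subtlety is bookkeeping: one must keep the permutations $H^{(\cdot)}$ used at distinct levels formally separate (since they may all differ), verify that the constraint $k<b^{d(t-1)}$ indeed keeps the associated $i$ inside the inductive range $i<b^{dt}$, and extend the construction consistently to the zero-shift block by setting $H_0=\vec{0}$; once this is in place, everything collapses to an iterated application of the defining shift $b^{-r-1}H^{(r+1)}_{\varepsilon_r}$, and no estimates are required beyond elementary arithmetic.
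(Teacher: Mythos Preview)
Your proof is correct and follows essentially the same approach as the paper's own argument: a descending induction on $t$ that peels off one level of the recursive construction at each step, accumulating the shift $b^{-t}H_{\varepsilon_{t-1}}$ into $v_m(t-1)$. The paper presents this slightly more informally---writing out the cases $t=m$ and $t=m-1$ explicitly and then stating the closed form $v_m(t)=\sum_{j=0}^{m-t} b^{-(m+1-j)}H_{\varepsilon_{m-j}}$ ``by induction''---while you carry out the inductive step abstractly and define $v_m(t)$ recursively; but the content is identical, including the convention $H_0=\vec{0}$ and the remark that the permutations may vary with the level.
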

\begin{proof}  By Definition  \ref{def_sbox}, and in its notation with $H_{\pi_j(0)}:=\vec{0}$, it follows that for $k'<b^{dm}$
	\[
	\big(s_{\varepsilon_m\cdot b^{dm}},\ldots,s_{\varepsilon_m\cdot b^{dm}+k'}\big)=\frac{H_{\pi_m(\varepsilon_m)}}{b^{m+1}}+	\big(s_{0},\ldots,s_{k'}\big),
	\]
	which covers the case $t=m$. If $t=m-1$ and $k''<b^{dm-d}$, then  with  $k'=\varepsilon_{m-1}\cdot b^{dm-d}+k''$ we obtain from the previous case that
	\begin{align*}
		&\big(s_{\varepsilon_m\cdot b^{dm}+\varepsilon_{m-1}\cdot b^{dm-d}},\ldots,s_{\varepsilon_m\cdot b^{dm}+\varepsilon_{m-1}\cdot b^{dm-d}+k''}\big)\\
		&\hspace{4
			cm}=\frac{H_{\pi_m(\varepsilon_m)}}{b^{m+1}}+	 \big(s_{\varepsilon_{m-1}\cdot b^{dm-d}},\ldots,s_{\varepsilon_{m-1}\cdot b^{dm-d}+k''}\big)\\
		&\hspace{4
			cm}=\frac{H_{\pi_m(\varepsilon_m)}}{b^{m+1}}+\frac{H_{\pi_{m-1}(\varepsilon_{m-1})}}{b^{m}}+	 \big(s_{0},\ldots,s_{k''}\big),
	\end{align*}
	where we  just used the final $k''+1$ elements. Similarly, for any $k<b^{dt}$ 
	\[
	\big(s_{K(t)},\ldots,s_{K(t)+k}\big)=\frac{H_{\pi_m(\varepsilon_m)}}{b^{m+1}}+ \big(s_{K(t)-\varepsilon_m\cdot b^{dm}},\ldots,s_{K(t)-\varepsilon_m\cdot b^{dm}+k}\big).
	\]
	We then obtain by induction 
	\[
	\big(s_{K(t)},\ldots,s_{K(t)+k}\big)=\sum_{j=0}^{m-t}\frac{H_{\pi_{m-j}(\varepsilon_{m-j})}}{b^{m+1-j}}+ \big(s_{0},\ldots,s_{k}\big).\qedhere
	\]
\end{proof}
\begin{figure}[h!]
	\begin{center}
		\includegraphics[scale=0.39]{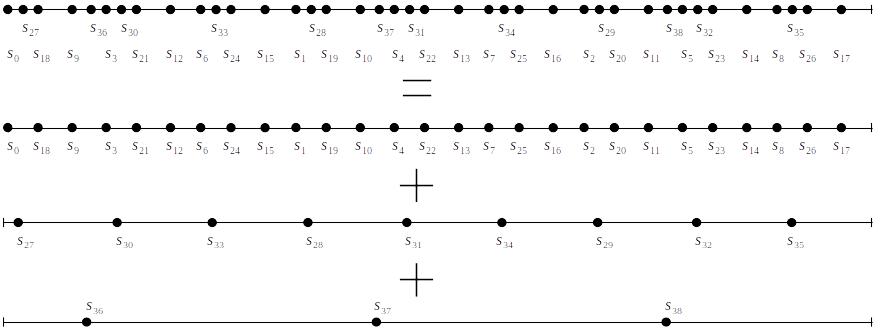}
		\caption{ Here we see how Lemma \ref{lem_latticeTracing} works for $N=39=3^3+3^2+3$ and  $S=S^\Box_{3,1}$ with $\pi_1=\pi_2=\pi_4$  the identity permutation of elements $(1,2)$, and $\pi_3\neq \pi_1$. }
		\label{fig:Lem8Visualization}
	\end{center}
\end{figure}

As indicated in Figure \ref{fig:Lem8Visualization}, Lemma \ref{lem_latticeTracing} and Equation \eqref{eq_SisLatticeForAllm} imply for  all $t<m$
\begin{equation}\label{eq_SisShiftedLatticeForAllm}
	\big(s_{K(t)},\ldots,s_{K(t)+b^{dt}-1}\big)=v+ S^\Box_{b^{dt}}=v+\ell^d_{b^t}=
\big(v+b^{-t}\Z^d\big)\cap [0,1)^d.
\end{equation}

\subsection{Proof of Theorem \ref{thm_main} with $L_m$ being lattice sets}\label{subsec_proofThm1LmLatticeSets}

Recall that  $\sum_{j=t}^{m}a_j:=0$ if $m<t$ and set $z_{j+1}=P(s_j)$ for $j\in\N_0$. We henceforth consider the sequence $Z=(z_1,\ldots)$, starting with the index 1. 

 \textbf{We first prove Equation \eqref{eq_mainResultN}.}
	Let  $N=\sum_{j=0}^n \epsilon_jb^{dj}$ with $0\leq \epsilon_j<b^d$, $\epsilon_n>0$. Equation \eqref{eq_SisShiftedLatticeForAllm} implies that $Z_N=\bigcup_{K}P(L'_{K})$ for some shifted lattice sets $L'_{K}$:
	\begin{equation*}
		\sum_{j=1}^{N}
		\delta(z_j)= \sum_{j=0}^n\sum_{k=1}^{\epsilon_{n-j}} \sum_{r=K(j,k)}^{K(j,k+1)-1}
		\delta(z_r),
	\end{equation*}
	where
	\begin{equation*}
		K(j,k)=1+(k-1)\cdot b^{d(n-j)}+\sum_{r=n-j+1}^{n}\epsilon_{r}b^{dr}.
	\end{equation*}	
	By Lemma \ref{lem_latticeTracing} and Equation \eqref{eq_SisShiftedLatticeForAllm}, there is  a  $v\in[0,b^{j-n})^d\cap \big(\Z^d\cdot b^{-n-1}\big)$ such that 
	\begin{align*}
		\big\{z_{K(j,k)},\ldots,z_{K(j,k+1)-1}\big\}&=P\big(\big\{s_{K(j,k)-1},\ldots,s_{K(j,k+1)-2}\big\}\big)\\
		&=P\big(\big[b^{j-n}\cdot\Z^d+v\big]\cap [0,1)^d\big)=:W_{n-j}.
	\end{align*}
	By our assumptions, $\mathscr{D}\big(E_{W_{n-j}}\big)\leq h_{n-j}^n$, and thus, by $f$--subadditivity,
	\allowdisplaybreaks
	\begin{align*}
	\mathscr{D}\big(E_{Z_N}\big)&=\mathscr{D}\Bigg(\sum_{j=0}^n\sum_{k=1}^{\epsilon_{n-j}} \sum_{r=K(j,k)}^{K(j,k+1)-1}
	\delta(z_r)\Bigg)\\
	&\leq f\Big(\sum_{j=0}^n\epsilon_j\Big)\cdot\sum_{j=0}^n\sum_{k=1}^{\epsilon_{n-j}}\mathscr{D}\Bigg( \sum_{r=K(j,k)}^{K(j,k+1)-1}
	\delta(z_r)\Bigg)\\
	&\leq f\Big(\sum_{j=0}^n\epsilon_j\Big)\cdot\sum_{j=0}^n\sum_{k=1}^{\epsilon_{n-j}}h_{n-j}^n=
	f\Big(\sum_{j=0}^n\epsilon_j\Big)\cdot\sum_{j=0}^n\epsilon_{j}h_{j}^n. 
	\end{align*}

\textbf{Next we prove Equation \eqref{eq_mainResultNprime}.}
We define index boundaries $G(j,k)$ to reflect the construction of points $S^\Box_{b,d}$:
\[
G(j,k)=G_{n}(j,k):=1+\big(k+\epsilon_{j}\big)\cdot b^{dj}+\sum_{r=j+1}^{n}\epsilon_{r}b^{dr}.
\]
Then we can write
\begin{align*}
		\sum_{j=1}^{N}
	\delta(z_j)&=	\sum_{j=1}^{b^{d(n+1)}}
	\delta(z_j)-	\sum_{j=N+1}^{N+b^d-\epsilon_0}
	\delta(z_j)-	\sum_{j=N+b^d-\epsilon_0+1}^{b^{d(n+1)}}
	\delta(z_j)\\
	&=	\sum_{j=1}^{b^{d(n+1)}}	\delta(z_j)-	\sum_{j=N+1}^{N+b^d-\epsilon_0}
	\delta(z_j)- \sum_{j=1}^n\sum_{k=1}^{b^{d}-1-\epsilon_j} \sum_{r=G(j,k)}^{G(j,k+1)-1}
	\delta(z_r).
\end{align*}
Note that $G(j,k+1)=G(j,k)+b^{dj}$ and hence the rightmost sum above adds $b^{dj}$ terms.	By Lemma \ref{lem_latticeTracing} and Equation \eqref{eq_SisShiftedLatticeForAllm} (with $\varepsilon_j=\epsilon_{j}+k$ and $\varepsilon_s=\epsilon_{s}$ for $s> j$), there is  a  $v\in[0,b^{-j})^d\cap \big(\Z^d\cdot b^{-n-1}\big)$ such that 
\begin{align*}
\big\{z_{G(j,k)},\ldots,z_{G(j,k+1)-1}\big\}&=P\big(\big\{s_{G(j,k)-1},\ldots,s_{G(j,k+1)-2}\big\}\big)\\
	&=P\big(\big[b^{-j}\cdot\Z^d+v\big]\cap [0,1)^d\big)=:W_{j}',
\end{align*}
and similarly for $N<r\leq N+b^d-\epsilon_0$ there is  a  $v\in[0,1)^d\cap \big(\Z^d\cdot b^{-n-1}\big)$ 
\begin{equation*}
z_{r}=P\big(s_{r-1}\big)=P\big(\big[\Z^d+v\big]\cap [0,1)^d\big).
\end{equation*}
By our assumptions, $\mathscr{D}\big(E_{W_{j}'}\big)\leq h_{j}^{n}$ and $\mathscr{D}\big(\delta(z_r)\big)\leq h_{0}^{n}$, hence, by $f$--subadditivity,
\allowdisplaybreaks
\begin{align*}
	&\mathscr{D}\big(E_{Z_N}\big)=\mathscr{D}\Bigg[	\sum_{j=1}^{b^{d(n+1)}}	\delta(z_j)-	\sum_{j=N+1}^{N+b^d-\epsilon_0}
	\delta(z_j)- \sum_{j=1}^n\sum_{k=1}^{b^{d}-1-\epsilon_j} \sum_{r=G(j,k)}^{G(j,k+1)-1}
	\delta(z_r)\Bigg]\\
	&\leq f\Big(1-n+\sum_{j=0}^n\big(b^{d}-\epsilon_j\big)\Big)\Bigg[\mathscr{D}\big(E_{Z_{b^{d(n+1)}}}\big)+(b^d-\epsilon_0)h_{0}^{n}+\sum_{j,k}\mathscr{D}\Big( \sum_{r}
	\delta(z_r)\Big)\Bigg]\\
	&\leq f\Big(1-n+\sum_{j=0}^n\big(b^{d}-\epsilon_j\big)\Big)\Bigg[h_{n+1}^{n}+h_{0}^{n}+\sum_{j=0}^n\sum_{k=1}^{b^{d}-1-\epsilon_j}h_j^{n}\Bigg]. \hspace{2cm} \qed
\end{align*}

\begin{figure}[h!]
	\begin{center}
		\includegraphics[scale=0.36]{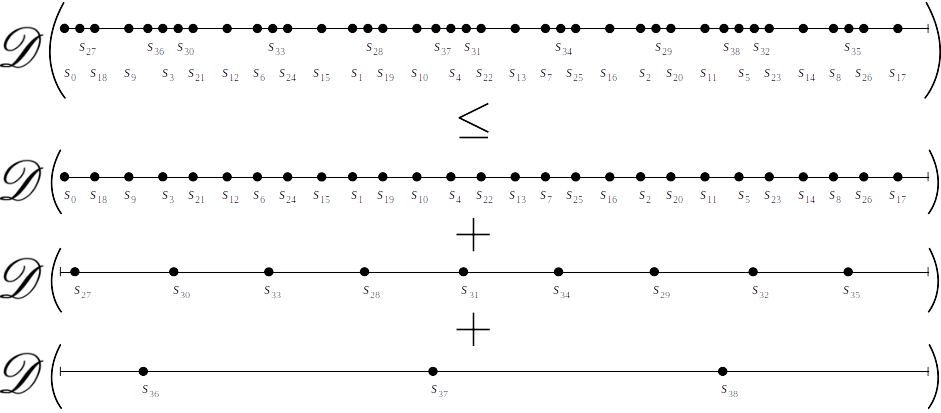}
		\caption{  Let $S_N$ be like in Figure \ref{fig:Lem8Visualization} and $\mathscr{D}$ be 1-subadditive. Here we see how Lemma \ref{lem_latticeTracing} and 1-subadditivity work hand in hand to bound $\mathscr{D}(E_{S_N})$. }
		\label{fig:Thm1Visualization}
	\end{center}
\end{figure}

\section{Guided Sequences and Theorem \ref{thm_main} }

We will next present an algorithm to construct many sequences $S^{\boxplus}=S^{\boxplus}_{b,d}$ on $[0,1)^d$ which use $S^\Box$ as a "guide". For $d=1$,  we recover $(0,1)$--sequences as introduced by Niederreiter  in \cite{Niederreiter} (see also \cite[p.792]{FaureKritzerPillichshammer}).

\subsection{The $S^\Box$--guided sequences $S^{\boxplus}$}\label{subsec_GuidedSequences}

Loosely speaking, an $S^\Box$--guided sequence is defined recursively  to fill out empty boxes of $\ell^d_{b^m}=\big(b^{-m}\Z^d\big)\cap [0,1)^d$, where the order in which this is done is guided by the fundamental sequence $S^\Box=S^{\Box}_{b,d}=(s_0,s_1,\ldots)$; recall Equation \eqref{eq_SisLatticeForAllm}.

\begin{definition}\label{def_LatticeGuided}
		For  arbitrary but fixed integers $b>1$ and $d\geq 1$ and a given  sequence $S^\Box=S^\Box_{b,d}=\big(s_0,s_1,\ldots\big)$  as in Definition \ref{def_sbox}, we call a sequence $S^{\boxplus}_{b,d}=\big(q_0,q_1,\ldots\big)$  \textbf{$S^\Box$-guided}, if $q_0\in[0,1)^d$ is arbitrary, and for any integer $m\geq 0$ the elements
		\[
		\big(q_{b^{dm}},\ldots,q_{b^{d(m+1)}-1}\big)
		\]
		are  such that $\{q_0,\ldots q_{b^{d(m+1)}-1}\}$ satisfies the following two conditions:
		\begin{enumerate}
			\item (lattice filling) for all $v\in \big(b^{-m-1}\Z^d\big)\cap [0,1)^d$
			\[
			\Big|\{q_0,\ldots q_{b^{d(m+1)}-1}\}\cap\big(b^{-(m+1)}[0,1)^d+v\big)\Big|=1;
			\]
			\item  ($S^\Box$-guidance) for every $0\leq r<b^{dm}$ and $1\leq k<b^d$, we have
			\[
			q_{r+k\cdot b^{dm}}\in \big(s_r+b^{-m}[0,1)^d\big).
			\]
		\end{enumerate}
\end{definition}
\begin{figure}[h!]
	\begin{center}
		\includegraphics[scale=0.35]{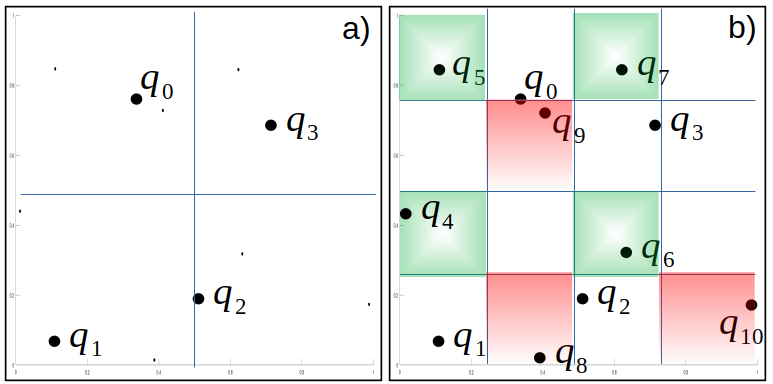}
		\caption{ We see the first 11 elements of  a possible choice for $S^{\boxplus}_{2,2}$, and how  lattice filling and the $S^\Box$-guidance  conditions are satisfied, where $S^\Box_{2,2}$ is as in Figure \ref{fig:Sbox}, which is also the base for the color scheme. The point $q_{11}$ could be placed  in one of the boxes to the right of $q_3$ or $q_7$; the point $ q_{12}$ must be placed in the box between $q_9$ and $q_8$; $q_{13}$ must be placed between $q_5$ and $q_4$, etc.}
		\label{fig:Sboxplus}
	\end{center}
\end{figure}
\begin{remark}
	 We note that in Definition \ref{def_LatticeGuided}, the set $\big(s_r+b^{-m}[0,1)^d\big)$ contains exactly one element $q_t$ for some $t<b^{dm}$. It is not necessarily always possible to also have $q_{r}\in \big(s_r+b^{-m}[0,1)^d\big)$ for all $r<b^{dm}$, but this can be achieved at least $(b^d-2)^m$-many times, see Figure \ref{fig:Sboxplus}.  The sequence $S^\Box$ is trivially  $S^\Box$--guided.
\end{remark}
The next corollary  shows that the "tails" of $S^\Box$ and $S^\boxplus$ approach each other.

\begin{corollary}
	Let $S^\boxplus_{b,d}$ be $S^\Box_{b,d}$--guided and $m(t)=\lfloor \log(t)/\log(b^d)\rfloor$, then for $t>0$
	\[
	\| s_t-q_t\|_{\infty}\leq b^{-m(t)}\leq b\cdot t^{-1/d}.
	\]
\end{corollary}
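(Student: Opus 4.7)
Let me set $m = m(t) = \lfloor \log(t)/\log(b^d)\rfloor$, so by definition $b^{dm} \leq t < b^{d(m+1)}$. The plan is to show that both $s_t$ and $q_t$ lie in a common axis-aligned half-open box of side length $b^{-m}$, from which the first inequality follows immediately.

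The first step is to write $t = k \cdot b^{dm} + r$ with $k \in \{1,\ldots,b^d-1\}$ and $0 \leq r < b^{dm}$, which is possible precisely because $t$ falls in the range above. By the recursive Definition \ref{def_sbox}, the block $\big(s_{k b^{dm}},\ldots,s_{(k+1)b^{dm}-1}\big)$ is the translate $b^{-m-1} H_k + S^\Box_{b^{dm}}$, so
\[
s_t = b^{-m-1} H_k + s_r \in s_r + b^{-m-1}\{0,\ldots,b-1\}^d \subset s_r + b^{-m}[0,1)^d.
\]
On the other hand, condition (2) in Definition \ref{def_LatticeGuided} applied with the parameters $r$, $k$, $m$ (in the role of that definition's $m$) gives directly that $q_t = q_{r + k\cdot b^{dm}} \in s_r + b^{-m}[0,1)^d$. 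Thus $s_t$ and $q_t$ sit in the same cube of $\ell^\infty$-diameter $b^{-m}$, so $\|s_t - q_t\|_\infty \leq b^{-m} = b^{-m(t)}$, which is the first inequality.

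For the second inequality, I use the upper half of the definition of $m(t)$: since $t < b^{d(m(t)+1)}$, we get $b^{m(t)+1} > t^{1/d}$, equivalently $b^{-m(t)} < b \cdot t^{-1/d}$, completing the chain. The only minor subtleties are checking that the decomposition $t = k b^{dm} + r$ has $k \geq 1$ (needed to apply the guidance condition in the stated form rather than the trivial case), which is exactly the lower bound $t \geq b^{dm}$, and ensuring the conventions on half-open boxes make the set inclusions go through; both are immediate. I do not foresee any real obstacle — the proof is essentially reading off what Definitions \ref{def_sbox} and \ref{def_LatticeGuided} say about the common box containing $s_t$ and $q_t$.
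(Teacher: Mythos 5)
Your proof is correct and follows essentially the same route as the paper: decompose $t = r + k\cdot b^{dm(t)}$ with $0 \le r < b^{dm(t)}$ and $1 \le k < b^d$, observe via Definition \ref{def_sbox} and Definition \ref{def_LatticeGuided}(2) that both $s_t$ and $q_t$ lie in $s_r + b^{-m(t)}[0,1)^d$, and then use $t < b^{d(m(t)+1)}$ for the second inequality. You simply spell out the intermediate steps (e.g. $s_t = b^{-m-1}H_k + s_r$ and the conversion $t < b^{d(m+1)} \Rightarrow b^{-m} < b\,t^{-1/d}$) that the paper leaves implicit.
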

\begin{proof}
	 Clearly $b^{dm(t)}\leq t=r+k\cdot b^{dm(t)}$ with $0\leq r<b^{dm(t)}$ and $1\leq k<b^d$. We have $s_t,q_t\in \big(s_r+b^{-m(t)}[0,1)^d\big)$ by Definition \ref{def_sbox} and Definition \ref{def_LatticeGuided} respectively. 
\end{proof}

\begin{lemma}\label{lem_latticeTracingGuided}

	Let  $m\in\N$, $(\varepsilon_1,\ldots,\varepsilon_m)$ and $K(t)$ be as in Lemma  \ref{lem_latticeTracing} and $S^{\boxplus}_{b,d}=(q_0,q_1,\ldots)$ be an  $S^\Box_{b,d}$--guided sequence. For any $0< t\leq m$, there is a vector $v=v_{m,\varepsilon_{j},\pi_j}(t)\in[0,b^{-t})^d\cap \big(\Z^d\cdot b^{-m-1}\big)$ such that
	\[
	\big\{q_{K(t)},\ldots,q_{K(t)+b^{dt}-1}\big\}-v
	\]
	is a perturbed lattice set of resolution $b^{-t}$, i.e. 
	 for all $w\in \ell^d_{b^t}$ we have
	 \[
	 \Big|\Big(\big\{q_{K(t)},\ldots,q_{K(t)+b^{dt}-1}\big\}-v\Big)\cap\big(w+b^{-t}[0,1)^d\big)\Big|=1.
	 \]
\end{lemma}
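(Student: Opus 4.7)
The strategy is to reduce the claim for $S^\boxplus$ to the corresponding statement for $S^\Box$ in Lemma~\ref{lem_latticeTracing} via the $S^\Box$-guidance condition of Definition~\ref{def_LatticeGuided}. First I would dispose of the trivial case in which $\varepsilon_j = 0$ for every $j \in \{t,\ldots,m\}$: then $K(t)=0$ and iterating the lattice-filling axiom down to level $t$ shows that $\{q_0,\ldots,q_{b^{dt}-1}\}$ is itself a perturbed lattice of resolution $b^t$, so $v=0$ suffices.

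Otherwise, I would introduce the ``highest active digit'' $l_0 := \max\{j \in \{t,\ldots,m\} : \varepsilon_j \geq 1\}$ and, for each $r \in \{0,\ldots,b^{dt}-1\}$, write
$$K(t)+r \;=\; r_{l_0} + \varepsilon_{l_0}\,b^{dl_0}, \qquad r_{l_0} := r + \sum_{j=t}^{l_0-1}\varepsilon_j b^{dj}.$$
A geometric-series computation gives $r_{l_0} \leq b^{dl_0}-1$, and of course $1 \leq \varepsilon_{l_0} \leq b^d-1$, so the $S^\Box$-guidance condition applied at level $l_0$ yields
$$q_{K(t)+r} \;\in\; s_{r_{l_0}} + b^{-l_0}[0,1)^d.$$

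Next I would invoke Lemma~\ref{lem_latticeTracing} for the sequence $S^\Box$ at ``level'' $l_0-1$ with digits $(\varepsilon_t,\ldots,\varepsilon_{l_0-1})$ (padding the lower indices with arbitrary values that do not enter the sum defining $K$; when $l_0 = t$ the sum is empty and one uses the trivial shift directly). This produces a vector $v = v_{l_0-1}(t) \in [0,b^{-t})^d \cap \Z^d b^{-l_0} \subset [0,b^{-t})^d \cap \Z^d b^{-m}$ satisfying $s_{r_{l_0}} = v + s_r$, so that as $r$ runs over $\{0,\ldots,b^{dt}-1\}$ the points $s_{r_{l_0}}$ enumerate $v + \ell^d_{b^t}$ exactly once.

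Combining the two displays, for each lattice point $w \in \ell^d_{b^t}$ there is a unique $r$ with $q_{K(t)+r} - v \in w + b^{-l_0}[0,1)^d$; since $b^{-l_0} \leq b^{-t}$ and $w \in [0,1-b^{-t}]^d$, this box sits inside $w + b^{-t}[0,1)^d \subset [0,1)^d$, and these cells partition $[0,1)^d$ into the $b^{dt}$ classes of the resolution-$b^t$ lattice, giving exactly one translated element per class. The main obstacle is really the bookkeeping: isolating the largest active index $l_0$ so that the $k \geq 1$ hypothesis of $S^\Box$-guidance is available, and verifying the bound $r_{l_0} < b^{dl_0}$. Once this scaffolding is in place, the perturbed-lattice structure of $S^\Box$ from Lemma~\ref{lem_latticeTracing} transfers to $S^\boxplus$ essentially for free.
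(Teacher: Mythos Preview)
Your proof is correct and follows essentially the same approach as the paper: apply the $S^\Box$-guidance condition at the highest active level (your $l_0$, the paper's ``WLOG $\varepsilon_m>0$'') to place each $q_{K(t)+r}$ in a small box centered at the corresponding $s_{r_{l_0}}$, then invoke Lemma~\ref{lem_latticeTracing} one level below to identify $s_{r_{l_0}}=v+s_r$ and read off the perturbed-lattice structure. Your version is slightly more careful with edge cases (the all-zero digits and $l_0=t$) and makes the final verification that the translated set actually fills each $b^{-t}$-cell explicit, but the argument is the same.
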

\begin{proof}
	Without loss of generality  we assume $\varepsilon_{m}>0$ and set
		\[
	K(t)=	\varepsilon_tb^{dt}+\sum_{r=t+1}^m\varepsilon_r b^{dr}.
	\]
	By the $S^\Box$-guidance condition of Definition \ref{def_LatticeGuided}, we have for the choice $k=\varepsilon_{m}$ and
	\[
	r\in\Big\{K(t)-\varepsilon_mb^{dm},\ldots,K(t)-\varepsilon_mb^{dm}+b^{dt}-1\Big\}
	\] 
	that 
	\[
	q_{r+\varepsilon_{m}\cdot b^{dm}}\in \big(s_r+b^{-m}[0,1)^d\big).
	\]
	If $t=m$, then the claim of the lemma is satisfied with $v=0$ since $S^\Box_{b^{dt}}=\ell^d_{b^t}$ by Equation \eqref{eq_SisLatticeForAllm}. If $t<m$, then, for some  $v(t)\in[0,b^{-t})^d\cap \big(\Z^d\cdot b^{-m-1}\big)$  by Lemma \ref{lem_latticeTracing}, 
	\[
\big(s_{K(t)-\varepsilon_mb^{dm}},\ldots,s_{K(t)-\varepsilon_mb^{dm}+b^{dt}-1}\big)=v(t)+S^\Box_{b^{dt}},
	\]
	or equivalently $s_r=s_{u+K(t)-\varepsilon_mb^{dm}}=v(t)+s_{u}$ for some $0\leq u<b^{dt}$, and hence
	\[
	q_{r+\varepsilon_{m}\cdot b^{dm}}\in
	\big(s_{r}+b^{-m}[0,1)^d\big)
		\subset \big(v(t)+s_{u}+b^{-t}[0,1)^d\big).\qedhere
	\]
\end{proof}

\subsection{Proof of Theorem \ref{thm_main}  with $L_m$ being perturbed lattices}\label{subsec_proofThm1LmPerturbed}

Recall that  $\sum_{j=t}^{m}a_j:=0$ if $m<t$ and set $z_{j+1}=P(q_j)$ for $j\in\N_0$ and $S^\boxplus=S^{\boxplus}_{b,d}=\big(q_0,q_1,\ldots\big)\subset [0,1)^d$. We henceforth consider the sequence $Z=(z_1,\ldots)$. 

\textbf{We first prove Equation \eqref{eq_mainResultN}.}
Let  $N=\sum_{j=0}^n \epsilon_jb^{dj}$ and recall the definition of $K(j,k)$ of 
Section \ref{subsec_proofThm1LmLatticeSets} such that we can write 
\begin{equation*}
	\sum_{j=1}^{N}
	\delta(z_j)= \sum_{j=0}^n\sum_{k=1}^{\epsilon_{n-j}} \sum_{r=K(j,k)}^{K(j,k+1)-1}
	\delta(z_r).
\end{equation*}
By Lemma \ref{lem_latticeTracingGuided}, there is vector $v$ and a perturbed lattice set $\omega_{b^{d(n-j)}}$ such that 
\begin{align*}
	\big\{z_{K(j,k)},\ldots,z_{K(j,k+1)-1}\big\}&=P\big(\big\{q_{K(j,k)-1},\ldots,q_{K(j,k+1)-2}\big\}\big)\\
	&=P\big(T_v(\omega_{b^{d(n-j)}})\big)=:W_{n-j}.
\end{align*}
By our assumptions, $\mathscr{D}\big(E_{W_{n-j}}\big)\leq h_{n-j}^n$, and  the same argument as in Section \ref{subsec_proofThm1LmLatticeSets} shows
\[
\mathscr{D}\big(E_{Z_N}\big)\leq 
f\Big(\sum_{j=0}^n\epsilon_j\Big)\cdot\sum_{j=0}^n\epsilon_{j}h_{j}^n. 
\]
\textbf{Next we prove Equation \eqref{eq_mainResultNprime}.}
Recall the definition of $G(j,k)$ in Section \ref{subsec_proofThm1LmLatticeSets}:
\begin{equation*}
	\sum_{j=1}^{N}
	\delta(z_j)=	\sum_{j=1}^{b^{d(n+1)}}	\delta(z_j)-	\sum_{j=N+1}^{N+b^d-\epsilon_0}
	\delta(z_j)- \sum_{j=1}^n\sum_{k=1}^{b^{d}-1-\epsilon_j} \sum_{r=G(j,k)}^{G(j,k+1)-1}
	\delta(z_r).
\end{equation*}
By Lemma \ref{lem_latticeTracingGuided}, there is vector $v$ and a perturbed lattice set $\omega_{b^{dj}}$ such that 
\begin{align*}
	\big\{z_{G(j,k)},\ldots,z_{G(j,k+1)-1}\big\}&=P\big(\big\{q_{G(j,k)-1},\ldots,q_{G(j,k+1)-2}\big\}\big)\\
	&=P\big(T_v(\omega_{b^{dj}})\big)=:W_{j}',
\end{align*}
and similarly for $N<r\leq N+b^d-\epsilon_0$ there is  a  $v\in[0,1)^d\cap \big(\Z^d\cdot b^{-n-1}\big)$ 
\begin{equation*}
	z_{r}=P\big(q_{r-1}\big)=P\big(\big[\Z^d+v\big]\cap [0,1)^d\big).
\end{equation*}
By our assumptions, $\mathscr{D}\big(E_{W_{j}'}\big)\leq h_{j}^{n}$ and $\mathscr{D}\big(\delta(z_r)\big)\leq h_{0}^{n}$, hence, by $f$--subadditivity, the same argument as in Section \ref{subsec_proofThm1LmLatticeSets} shows
\[
\mathscr{D}\big(E_{Z_N}\big)\leq f\Big(1-n+\sum_{j=0}^n\big(b^{d}-\epsilon_j\big)\Big)\Bigg[h_{n+1}^{n}+h_{0}^{n}+\sum_{j=0}^n\sum_{k=1}^{b^{d}-1-\epsilon_j}h_j^{n}\Bigg].  \qed
\]

\section{Proof of Theorem \ref{thm_SizeOfSetsB}}\label{sec_proofSizeofSetsB}

	Let $\delta=C\cdot\big(\beta\cdot  \log(b^d)\big)^{-1}$  with $\beta\geq 2$ fixed. We can write any $m\in\N$ as $m=\sum_{j=0}^{l(m)}\epsilon_j(m)b^{dj}$ for $\epsilon_{j}(m)\in\{0,1,\ldots,b^d-1\}$  and $l(m)=\lfloor\log(m)/\log(b^d)\rfloor$.
	We will give a lower bound on a subset of numbers $m<N$  that satisfy
	\begin{equation}\label{eq_proofBoundsBbdelta}
		\sum_{j=0}^{l(m)}\epsilon_j(m)\ \leq\ \frac{l(m)}{\beta},
	\end{equation}
	which then gives a lower bound on $\big| \mathcal{B}_b(\delta,N)\big|$ by the following chain of inequalities:
	\[
		|\mathrm{D}(E_{Z_m})|\  \leq\  C	\sum_{j=0}^{l(m)}\epsilon_j(m)\ \leq\ C	\frac{l(m)}{\beta}\ \leq\ \delta\cdot\log(b^d)\cdot l(m)\ \leq\ \delta\log(m).
	\]
	For $t\in\N$ we define $J(t):=[t\beta,(t+1)\beta) \cap \N$.
 Then we have the following equivalence concerning Equation \eqref{eq_proofBoundsBbdelta} if $l(m)\in J(t)$:
 \begin{equation}\label{eq_Thm3boundbyt}
 	\sum_{j=0}^{l(m)}\epsilon_j(m)\ \leq\ \frac{l(m)}{\beta}\hspace{0.2cm}\Leftrightarrow\hspace{0.2cm}
 	\sum_{j=0}^{l(m)}\epsilon_j(m)\ \leq t
 \end{equation}
 since the $\epsilon_{j}$'s are integers and $l(m)\in J(t)$ implies
 \[
 t\leq \ \frac{l(m)}{\beta}\  <\ t+1.
 \]

	Our strategy is  to bound the number of integers  $m'=\sum_{j=0}^{\lceil t\beta \rceil}\epsilon_j(m') b^{dj}$ in 
	\[
	A(t):=\Big\{ m'\in\N\ |\ m'<b^{d\lceil t\beta \rceil+d},
	\epsilon_j(m')\in\{0,1\},
	\epsilon_{\lceil t\beta \rceil}(m')=1,
	\sum_{j=0}^{\lceil t\beta \rceil}\epsilon_j(m')\leq t \Big\}.
	\] 
It follows that $A(t)\subset 	\mathcal{B}_b\big(\delta,b^{d\lceil t\beta \rceil+d}\big)$, since  $l(m')=\lceil t\beta \rceil$ by design and hence $l(m')\in J(t)$ such that, again by design, Equation \eqref{eq_Thm3boundbyt} holds and thus Equation \eqref{eq_proofBoundsBbdelta} is satisfied.
	 Hence our task is to determine the size of $A(t)$, which is equivalent to answering the combinatorial  question of
	how many ways there are to fill up to $(t-1)$ slots given $\lceil t\beta \rceil-1$ slots  to choose from. This number is given by 
\begin{equation}\label{eq_SumBinomsBound}
	|A(t)|=\sum_{j=0}^{t-1}\binom{\lceil t\beta \rceil-1}{j}>
	\binom{\lceil t\beta \rceil-1}{t-1}.
\end{equation}
	The rightmost binomial expression above will be bound with the following result.
	
	\vspace{0.2cm}
		
	\textbf{Lemma A} (Worsch \cite[Lem.2.5]{Worsch}) For every $\epsilon>0$ and $a\geq 2$ there is a $\nu_0(\epsilon)\in\N$ such that if $\nu>\nu_0(\epsilon)\cdot  a$, then
		\[
		 \frac{1}{1+\epsilon}\frac{1}{\sqrt{2\pi \nu}}\sqrt{\frac{a^2}{a-1}}C(a)^\nu<
		\binom{\nu}{\lfloor \frac{\nu}{a}\rfloor} <
		 \frac{1+\epsilon}{\sqrt{2\pi \nu}}\sqrt{\frac{a^2}{a-1}}C(a)^\nu,
		\]
		where $C(x)=g(x)\cdot g\big(\frac{x}{x-1}\big)$ with $g(x)=\sqrt[x]{x}$.
		
			\vspace{0.2cm}
		
		Let $N=\sum_{j=0}^n \epsilon_jb^{dj}$ with $n\in J(T+1)$ and $T-1\geq \nu_0(1)$ as in Lemma A.
		Since $\lceil T\beta\rceil +1\leq  (T+1)\beta \leq n$ 
		we  obtain with Lemma A ($\epsilon=1$, $a=\beta$ and $\nu=\lceil T\beta \rceil-1$) and
		\[
		\Big\lfloor\frac{\lceil T\beta \rceil-1}{\beta}\Big\rfloor=T-1,
		\]
		 that (by Equation \eqref{eq_SumBinomsBound})
		\[
		\big| \mathcal{B}_b(\delta,N)\big|\geq \big| \mathcal{B}_b(\delta,b^{d\lceil T\beta \rceil+d})\big|
		>\frac{1}{\sqrt{8\pi (\lceil T\beta \rceil-1)}}\sqrt{\frac{\beta^2}{\beta-1}}C(\beta)^{\lceil T\beta \rceil-1}.
		\]
	 For $n\in J(T+1)$, we have $(T+1)\beta\leq n\leq \log(N)/\log(b^d)< (T+2)\beta+1$ and the claim  of Theorem \ref{thm_SizeOfSetsB} follows after some elementary manipulations. \qed

\section{Proofs for the Applications}\label{sec_proofApplications}
 It is clear that the $L^p$--discrepancy is $f$--subadditive with $f(n)=\max\{n^{p-1},1\}$ from the remarks after Definition \ref{def_fSubLinear}.  The reference measure $\mu$ on $[0,1)$ is$\dx$, while on $\s$ it is the normalized surface measure.

\begin{proof}[Proof of Theorem \ref{thm_ApplvdC}]
	With $v\in[0,1)$ and  $\omega(b,m,v)=[0,1)\cap \big((\Z+v)\cdot b^{-m}\big)$, we have
	\[
	 \scd(\omega(b,m,v))\leq b^{-m}
	\]
	and hence $h_m^\infty=1$, if $\mathscr{D}(E_{Z_N})=N\scd(E_{Z_N})$. Furthermore, for $p>0$,
	\begin{align*}
		&\lpd(\omega(b,m,v))=\int_0^1\big| b^{-m}\sum\chi_{[0,t)}(x_j)-t\big|^p\dt\\
		&=\int_0^{vb^{-m}}t^p\dt
		+\sum_{j=0}^{b^m-2}\int_{(v+j)b^{-m}}^{(v+j+1)b^{-m}}\big| b^{-m}(j+1)-t\big|^p\dt+\int_{1-(1-v)b^{-m}}^{1}\big| 1-t\big|^p\dt\\
		&=\frac{(vb^{-m})^{p+1}}{p+1}
		+\sum_{j=0}^{b^m-2}b^{-m}b^{-mp}\int_{0}^{1}\big|1-v-t\big|^p\dt+  \frac{((1-v)b^{-m})^{p+1}}{p+1}\\
		&=\frac{v^{p+1}+(1-v)^{p+1}}{p+1}\Big(b^{-m(p+1)}+\frac{b^m-1}{b^m}b^{-mp}\Big)=\frac{v^{p+1}+(1-v)^{p+1}}{p+1}b^{-mp}.
	\end{align*}
Thus,
\[
\frac{1}{2^p}\frac{1}{p+1}b^{-mp}\leq \lpd(\omega(b,m,v))\leq \frac{1}{p+1}b^{-mp}
\]
and hence $h_m^\infty\cdot(p+1)=1$ in this case, if $\mathscr{D}(E_{Z_N})=N^p\lpd(E_{Z_N})$.
An application of Theorem \ref{thm_main} with these $h_m^\infty$ then yields the claim.
\end{proof} 

\begin{proof}[Proof of Theorem \ref{thm_ApplNiederreiter}]
	With  $\omega=\omega(b,m)=\{x_1,\ldots,x_{b^m}\}$ a perturbed lattice set of resolution $b^m$ and $T_v$ a valid translate for $\omega$ with $v\in[0,1)$,  we have
	\[
	\scd\big(T_v(\omega(b,m))\big)\leq 2b^{-m},
	\]
	and hence $h_m^\infty=2$ in this case, if $\mathscr{D}(E_{Z_N})=N\scd(E_{Z_N})$. Furthermore, with  
	\[
	F_\omega(t):=b^{-m}\sum_{j=1}^{b^m}\chi_{[0,t)}(x_j)-t,
	\]
	we have 
	\[
	\lim_{\tau\ra0^+}F_\omega(j/b^m-\tau)=0
	\]
	 for all $1\leq j\leq b^m$ and $|F_\omega(t)|\leq b^{-m}$ since $\omega(b,m)$ is a perturbed lattice set. Thus, with  $\omega'=b^{-m}\Z\cap[0,1)$, we see that for any $p>0$ and $0\leq j<b^m$ 
	 \begin{equation}\label{eq_proofTheorem5}
	 		\int_{j/b^m}^{(j+1)/b^m}|F_\omega(t)|^p\dt\leq \int_{j/b^m}^{(j+1)/b^m}|F_{\omega'}(t)|^p\dt,
	 \end{equation}
	simply because the graphs of $F_{\omega'}$ and $F_{\omega}$ are saw--tooth functions.
	What we really need is a bound on
	\[
	\int_0^1 |F_{T_v(\omega)}(t)|^p\dt.
	\]
	If $m=0$, then $(p+1)^{-1}$ is an upper bound as we see from Equation \eqref{eq_proofTheorem5}. For $m>0$ we introduce a  limit case of a perturbed lattice set: $\omega''=\{x_1'',\ldots,x_{r}''\}$ with $x_j''=x_{j+1}''=(j-1)/b^m$ if $j=1\mbox{ mod }2$ and $1\leq j\leq b^m$. This is a set of equally spaced pairs of points with distance $2/b^m$ between consecutive pairs, starting at the origin. After some thought, it should be clear that 
	\[
		\lpd\big(T_v(\omega(b,m))\big)=\int_{0}^{1}|F_{T_v(\omega)}(t)|^p\dt\leq \int_{0}^{1}|F_{\omega''}(t)|^p\dt=\lpd(\omega'').
	\]
	A similar computation as in the proof of Theorem \ref{thm_ApplvdC} yields
	\begin{align*}
		\lpd(\omega'')&\leq\sum_{j=0}^{\lceil b^m/2\rceil-1}\int_{2j/b^m}^{2(j+1)/b^m}\big| 2(j+1)b^{-m}-t\big|^p\dt\\
		&=\sum_{j=0}^{\lceil b^m/2\rceil-1}b^{-m}b^{-mp}\int_{0}^{2}\big| 2-t\big|^p\dt\leq b^{-mp}\frac{2^{p}}{p+1}.
	\end{align*}
	Hence, $h_m^\infty\cdot(p+1)\leq 2^p$ in this case, if $\mathscr{D}(E_{Z_N})=N^p\lpd(E_{Z_N})$.
	An application of Theorem \ref{thm_main} with these $h_m^\infty$ then yields the claim.
\end{proof}

 \textbf{Theorem B} (\cite[Thm.1]{Ferizovic} DF '22)
 	Let  $\omega_{b^m}$ be a perturbed lattice set of resolution $b^m$ and $T_v$ a valid translate for $\omega_{b^m}$, then, for $L:[0,1)^2\ra\s$ the Lambert projection, we have
 	$$\scd(L(T_v(\omega_{b^m})))\ \leq\ \frac{2+3\sqrt{2}}{b^m}+O(b^{-2m}).$$
 	\begin{proof}
 		Note that $T_v(\omega_{b^m})$ might not be a perturbed lattice
 		and hence \cite[Thm.1]{Ferizovic} doesn't apply straightforwardly, but we will describe the changes that are necessary to adapt it to our current needs. In the notation of \cite{Ferizovic}  we fix $Q=\fatone$, the identity matrix, and $N^\fatone(b^m)=\#|T_v(\omega_{b^m})|$. Then $b^{2m}-b^m\leq N^\fatone(b^m)\leq b^{2m}$ since up to $b^m$ many points of $T_v(\omega_{b^m})$ could lie on the $x$-axis  which is excluded from the domain $I^2$ there, such that $d^\fatone(b^m)\leq 1$ since $T_v(\omega_{b^m})\subset [0,1)^2$ by assumption.  
 		 The discrepancy estimates for caps in \cite{Ferizovic}  do not change if we consider the following tiling instead (note the shift by $v$, see the right hand side of Figure \ref{fig:ProofThmB}):
 		\[
 		[0,1)^2=\bigcup_{p\in( v+\Z^2)} b^{-m}\big(p+[0,1)^2\big)\cap[0,1)^2.
 		\]
 		Thus the discrepancy contribution from the  cap boundaries is still bounded by $C^\fatone \sqrt{2} b^{-m}\leq 3\sqrt{2} b^{-m}$, which follows from Lemma 2.2. of \cite{Ferizovic}. The last term of  \cite[Thm.1]{Ferizovic} that we need to take care of is $M^\fatone(b^m)$, which is the discrepancy contribution induced by the projection of the unit-square to the sphere. By its definition, it follows that $M^\fatone(b^m)\leq 1$ (see also Figure \ref{fig:ProofThmB}). This inequality holds for $\omega_{b^m}$ since at most $b^m$ points could lie on the $x$-axis of $I^2$ and the vertical boundary lines of $I^2$ do not give any contribution at all since $Q=\fatone$. This also holds for $T_v(\omega_{b^m})$ since the horizontal shift (the $x$-component of $v$) moves $\omega_{b^m}$ to the right, and the tiles $b^{-m}\big(p+[0,1)^2\big)\cap[0,1)^2$ that intersect the vertical line $\{1\}\times[0,1]$ each have a point mass from $T_v(\omega_{b^m})$, but the tiles  $b^{-m}\big(p+[0,1)^2\big)\cap[0,1)^2$ that intersect the vertical line $\{0\}\times[0,1]$ have no points at all. Hence the contribution from the vertical boundary lines of $I^2$ is zero.
 	\end{proof}
\begin{figure}[h!]
	\begin{center}
		\includegraphics[scale=0.3]{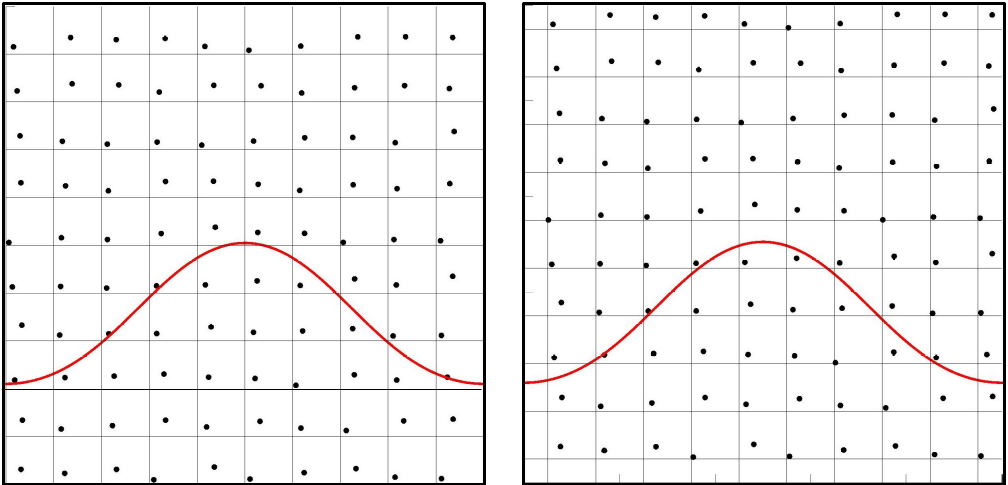}
		\caption{This visual aid intends to help the reader verify that $M^\fatone(b^m)\leq 1$ in the proof of Theorem B. The curve represents $L^{-1}(\partial C)$ of a spherical cap $C$. The dots represent $\omega_{b^m}$ and $T_v(\omega_{b^m})$ respectively.}
		\label{fig:ProofThmB}
	\end{center}
\end{figure}

\begin{proof}[Proof of Theorem \ref{thm_AppS2}]
 Theorem B    yields bounds on the values $h_m^\infty$ with  $N=b^{2m}$:
	\[ 
	\mathscr{D}(E_{\omega_{N}})=b^{2m}\scd(L(\omega_{b^{2m}}))\ \leq\ b^{m}\big(2+3\sqrt{2}\big)+C.
	\]
	The existence of $C=C(\det(Q))$ follows from the proof of \cite[Thm. 1]{Ferizovic}. 
	 For general $N=\sum_{j=0}^{n}\epsilon_j\cdot b^{2j}$, we then apply Theorem \ref{thm_main},  Equations \eqref{eq_mainResultN} and \eqref{eq_mainResultNprime} to obtain the first inequality.
	 The fact $\sqrt{N}\geq b^{n}$ proves the second inequality.
\end{proof}

\begin{figure}[h!]
	\begin{center}
		\includegraphics[scale=0.25]{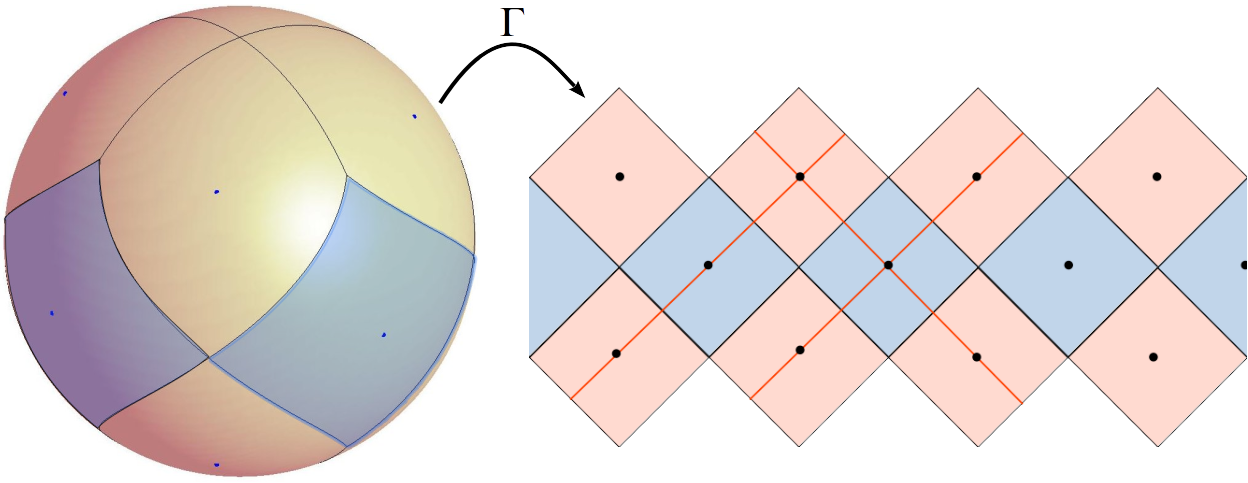}
		\caption{ A visual representation of the HEALPix projection map $\Gamma:\s\ra R$.}
		\label{fig:HealpixGamma}
	\end{center}
\end{figure}


\textbf{Theorem C} (\cite[Thm.1]{FerHofMas} DF, Hofstadler, Mastrianni '22)
Let $v\in\R^2$. Consider the HEALPix projection map $\Gamma:\s\ra R\subset \C\equiv \R^2$ for $N=12\cdot 4^m$ and define 
\[
H^v_N =\Gamma^{-1}\Big(\Big[\frac{\sqrt{2}}{4}\frac{1}{2^m}e^{i\pi/4}\big(\Z+i\Z\big)+v\Big]\cap R\Big),
\]
where we abuse notation a bit when we identify $\R^2$ with $\C$.
$H^0_N$ is then  the set of centers of pixels in a HEALPix tessellation of $\s$ with $N$ pixels. Then 
\[
\frac{1}{\sqrt{3N}}\ \le\ \scd(H^v_N) \ \le\  \frac{2}{\sqrt{3}}\frac{5+\sqrt{2}}{\sqrt{N}}  + \frac{1000}{N}.
\]
\begin{proof}
	The proof is verbatim the same as in \cite{FerHofMas}, since the discrepancy is treated by separate investigation of area and boundary contribution from caps. The position of points plays no role whatsoever, as long as one pixel is occupied by exactly on point, and hence is valid for translates too.
\end{proof}

\begin{proof}[Proof of Theorem \ref{thm_AppHealpix}]  We need the HEALPix projection $\Gamma^{-1}$ (see Section 3 of \cite{FerHofMas} and Figure \ref{fig:HealpixGamma}) with domain consisting of 12 squares of side-length $\sqrt{2}/2$. We first consider a set-valued map $\gamma:[0,1)^2\ra\mbox{dom}(\Gamma^{-1})$, where $\gamma(x)$ is the set of 12 copies (in the obvious way) of $x$ scaled by $\sqrt{2}/2$ and rotated by 45 degrees. Then we define a set-valued $G:[0,1)^2\ra\s$ by
	\begin{equation}\label{eq_defGprojection}
		G(x)=\Gamma^{-1}\circ \gamma(x).
	\end{equation}
	For a sequence $S=(s_0,\ldots)=S^\Box_{2,2}\subset[0,1)^2$ and $n\in\N$ let
	\[
	G(s_{n-1})=\{q^n_1,\ldots,q^n_{12}\}\subset\s.
	\]
	We then write $Z=G(S)$ for the sequence
	\[
	Z=(z_1,z_2,\ldots)=(q^1_1,\ldots,q^1_{12},q^2_1,\ldots,q^2_{12},q^3_1,\ldots,q^3_{12},\ldots)
	\] 
	 where the elements $\{q^n_1,\ldots,q^n_{12}\}$ are ordered arbitrarily.
	Then for $K\in\N$
	\[
	G(S_{K})=Z_{12 K}=(q^1_1,\ldots,q^1_{12},\ldots,q^{K}_1,\ldots,q^{K}_{12})=(z_1,\ldots,z_{12 K}).
	\]
		Define
		\[
		\mathscr{D}(E_{S_{K}})=12 K\cdot\scd (G(S_K)).
		\]
		This is $1$--subadditive over the empirical measures, since for finite\footnote{We treat $A\cup B$ as a multiset.}  $A,B\subset[0,1)^2$
		\begin{align*}
			\mathscr{D}(E_{A\cup B})&=12\big(|A|+|B|\big)\cdot\scd (G(A\cup B))\\
			&=\sup_{F\in T}\Bigg|\sum_{j=1}^{12(|A|+|B|)}\chi_{F}(x_j)-12\big(|A|+|B|\big)\mu(F)\Bigg|\\
			&\leq 12|A|\cdot\scd (G(A))+12|B|\cdot\scd (G( B)).
		\end{align*}
	Let $L_m=\ell^2_{2^m}$, then  Theorem C states that, for $N=12\cdot 4^m$,
		\[
		\sup_{(L_m,v)\in \Omega_m}\mathscr{D}\big(E_{T_{v}(L_m)}\big)= h_m^\infty\ \leq\ 2^m\big(5+\sqrt{2}\big)4+ 1000.
		\]
    Let $N=12 K+k$ be as in Theorem \ref{thm_AppHealpix}. Then with $R_k=\{z_{12K+1},\ldots,z_{12K+k}\}$
    \[
    \mathscr{D}\big(E_{Z_N}\big)\ \leq \ \mathscr{D}\big(E_{Z_{12K}}\big)+\mathscr{D}\big(E_{R_k}\big)
    \]
    and the last term is bounded by a universal constant independent of $K$ and $k$. The claim then follows from Theorem \ref{thm_main} and 
	the trivial estimate $\sqrt{12}\cdot 2^{\kappa}\leq\sqrt{N}$.
\end{proof}

\section{Extensions to Self-Similar Sets}\label{sec_MiscExten}

The underlying idea for our construction of the sequences $S^\Box$ and $S^\boxplus$ can be extended to more general sets than hyper-cubes. Indeed, after the first version of this text was uploaded to arXiv, we found the work by Basu \& Owen \cite{BasuOwen} who defined a sequence of points in a triangle. Later still we found the work of Infusino and  Volčič \cite{InfusinoVolcic} who gave the same method based on Hutchinson's results as we do here (that's why we recognized our construction in theirs in the first place), but our algorithm is slightly more general.

\vspace{0.1cm}

The next definition is a combination of notions as introduced in Hutchinson \cite{Hutchinson}, where we added the restriction that $\mathscr{H}_n(K)<\infty$ which follows from the results anyway.

\begin{definition} Let  $\mathscr{I}=\{A_1,\ldots, A_k\}$  be a set of \textbf{contraction maps} on a complete metric space $(X,\delta)$, i.e. $A_j:X\ra X$ is Lipschitz continuous with Lipschitz constant $\lambda_j<1$. 
	$\mathscr{I}$ is said to satisfy the \textbf{open set condition} if there is a non-empty, open set $U\subset X$ with 
	\[
	\bigcup_{j=1}^k A_j(U)\subset U, \hspace{0.3cm}\mbox{and}\hspace{0.3cm}A_j(U)\cap A_s(U)=\emptyset,\hspace{0.1cm}\mbox{if }j\neq s.
	\]	
	A non-empty set $K\subset X$ is called \textbf{invariant} with respect to  $\mathscr{I}$ if 
	\[
	K=\bigcup_{j=1}^k A_j(K).
	\]
	We call $K$ \textbf{self-similar} wrt. $\mathscr{I}$ if $0<\mathscr{H}_n(K)<\infty$ for some $n>0$, where $\mathscr{H}_n$ denotes the $n$-dimensional Hausdorff measure, and for $j\neq s$ we have
	\[
	\mathscr{H}_n\big[A_j(K)\cap A_s(K)\big]=0.
	\]
	
\end{definition}

\textbf{Theorem D} (Hutchinson \cite[p.713]{Hutchinson}) Let  $\mathscr{I}=\{A_1,\ldots, A_k\}$   be a set of contraction maps on a complete metric space $(X,\delta)$, then there exists a \textit{unique, closed, bounded and compact set} $K_{\mathscr{I}}\subset X$ which is invariant with respect to  $\mathscr{I}$.
\vspace{0.1cm}

\textbf{Theorem E} (Hutchinson \cite[p.714]{Hutchinson}) Let $(X,\delta)=(\R^d,\|\cdot\|)$ and $\mathscr{I}=\{A_1,\ldots, A_k\}$   be a set of maps satisfying $A_j=\sigma_j\circ \tau_j\circ O_j$ where $\sigma_j(x)=r_j\cdot x$ for $r_j\in[0,1)$, $\tau_j(x)=x-b_j$ for some $b_j\in\R^d$ and $O_j$ is an orthonormal map (see  \cite[p.717]{Hutchinson}). Let $\mathscr{I}$ satisfy the open set condition. Then from Theorem D we obtain the set $K_{\mathscr{I}}$ which  is now guaranteed to be self-similar with Hausdorff dimension given by the unique number $n$, such that
\[
\sum_{j=1}^k r_j^n=1.
\]

\begin{definition}\label{def_selfSimilarSet}  Let  $\mathscr{I}_n=\{A_1,\ldots, A_k\}$ be as in Theorem E with $r_j=k^{-1/n}$ and let $\Omega\subset \R^d$ denote the unique, compact, self-similar set, invariant with respect to $\mathscr{I}_n$ and necessarily of Hausdorff dimension $n$.
\end{definition}
\begin{remark}
	It follows easily that $\mathscr{H}_n\big(A_j(\Omega)\big)=\tfrac{1}{k}\mathscr{H}_n(\Omega).$ Rectangles and triangles embedded in $\R^d$ are such $\Omega$, so are $k$-reptiles (see  Matoušek \& Safernová \cite{Matousek}). 
\end{remark}

 Figure \ref{fig:TriangleExample} illustrates two examples for $\Omega$ being the triangle $\triangle$ with corners at $v_1=(0,0)^t, v_2=(1,0)^t, v_3=(0,1)^t$, where $A_j(x)=\onehalf M_j(x)+\onehalf v_j$ with  $M_t=\fatone$  (= identity matrix) for $t<4$ and $M_4=-\fatone$, $ v_4=(1,1)^t$. (For every other triangle $T$ we have $T=M\triangle+v$ for some invertible matrix $M$ and vector $v$.)

\begin{algorithm*}
	Let $\Omega$   and $A_1,\ldots, A_k$ be as in Definition \ref{def_selfSimilarSet}.  We  define a sequence $S=(s_0,s_1,\ldots)$ in $\Omega$ recursively as follows: Choose $s_0\in\Omega$ arbitrarily and fix a sequence $(\pi_1,\pi_2,\ldots)$ of permutations of $k$ elements. Define for $1\leq m\leq k$
	\[
	V_m=A_{\pi_1(m)}(\Omega).
	\]
	Choose $q_m\in V_m$ arbitrarily. Choose the smallest $p$ such that $s_0\in V_p$ for some $p\in\{1,\ldots,k\}$. Define  $s_j=q_j$ for $j<p$ and $s_{j-1}=q_{j}$ for $j>p$ and $0<j\leq k$.
	
	Suppose  elements $\{s_0,\ldots,s_{k^t-1}\}$ and   subsets $V_1,\ldots,V_{k^t}$ have been defined. Then, for $1\leq m\leq k^t$ and $1\leq r\leq k$ choose arbitrary $q^r_{m}$  such that
	\[
	q^r_{m}\in V_{(r-1)\cdot k^t+m}:=A_{\pi_{t+1}(r)}(V_m).
	\]
	There are $1\leq p_1<\ldots <p_{k^t}\leq k^{t+1}$ with $s_j\in V_{p_{j+1}}$. Consider the  vector:
	\[
	\big(q^1_1,q^2_1,\ldots,q^k_1,q^1_2,q^2_2,\ldots,q^k_2,q^1_3,q^2_3,\ldots,q^k_3,\ldots,q^1_{k^t},q^2_{k^t},\ldots,q^k_{k^t}\big)
	\]
	from which we remove elements $q^{r}_{u}$ with $ [(r-1)\cdot k^t+u]\in\{p_1,\ldots,p_{k^t}\}$ to obtain   
	\[
	(s_{k^t},\ldots, s_{k^{t+1}-1});
	\]
	i.e. the next elements of $S$. (Thus $s_{k^t}=q^1_1$, unless $1=p_j$ for some $j$, etc.) \null\hfill$\Diamond$
\end{algorithm*}

\begin{figure}[h!]
	\begin{center}
		\includegraphics[scale=0.36]{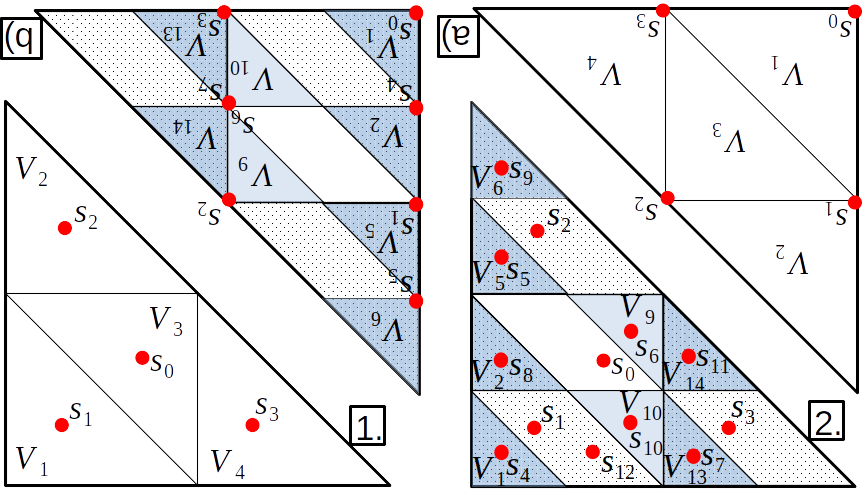}
		\caption{ Triangles 1./ 2. show the sequence when $q^r_m$ are centroids. In Triangles \textrm{a})/\textrm{b})  the $q^r_m$ are corners-- where we see that $s_6=s_7$.}
		\label{fig:TriangleExample}
	\end{center}
\end{figure}

 \begin{acknowledgement}
	I thank Ryan Matzke for helping me improve the language of the text and for suggesting to state Theorems B and C explicitly which made the proofs of Theorem \ref{thm_AppS2} and \ref{thm_AppHealpix} easier to read. I thank the anonymous referees for their time and helpful comments.
\end{acknowledgement}

%


\end{document}